\crefname{hypothesis}{Hypothesis}{Hypotheses}
\title{ Incremental Methods for \\ Weakly Convex Optimization
}
\author{Xiao Li\thanks{School of Data Science, The Chinese University of Hong Kong, Shenzhen. 
		(\email{lixiao@cuhk.edu.cn}, \url{ https://sites.google.com/view/xli}).}
\and Zhihui Zhu\thanks{Department of Electrical and Computer Engineering, University of Denver. (\email{zhihui.zhu@du.edu}, \url{http://mysite.du.edu/\~zzhu61/}).}
\and Anthony Man-Cho So\thanks{Department of Systems Engineering and Engineering Management, The Chinese University of Hong Kong. 
	(\email{manchoso@se.cuhk.edu.hk}, \url{http://www.se.cuhk.edu.hk/\~manchoso}). }
\and Jason D Lee\thanks{ Department of  Electrical Engineering,   Princeton University. (\email{jasonlee@princeton.edu}, \url{https://jasondlee88.github.io/}).
}
}
\newtheorem{thm}{Theorem}
\newtheorem{lem}{Lemma}
\newtheorem{prop}{Proposition}
\newtheorem{defi}{Definition}
\newtheorem{assum}{Assumption}
\newcommand{\R}{\mathbb{R}}
\newcommand{\C}{\mathbb{C}}
\newcommand{\e}{\begin{equation}}
\newcommand{\ee}{\end{equation}}
\newcommand{\en}{\begin{equation*}}
\newcommand{\een}{\end{equation*}}
\newcommand{\eqn}{\begin{eqnarray}}
\newcommand{\eeqn}{\end{eqnarray}}
\newcommand{\bmat}{\begin{bmatrix}}
\newcommand{\emat}{\end{bmatrix}}
\newcommand{\btab}{\begin{tabular}}
\newcommand{\etab}{\end{tabular}}
\newcommand{\vct}[1]{\boldsymbol{#1}}
\newcommand{\mtx}[1]{\boldsymbol{#1}}
\newcommand{\T}{\mathrm{T}}
\newcommand{\rank}{\operatorname{rank}}
\newcommand{\dist}{\operatorname{dist}}
\DeclareMathOperator*{\argmin}{\text{argmin}}
\def \st {\operatorname*{subject\ to\ }}
\newcommand{\calA}{\mathcal{A}}
\newcommand{\calC}{\mathcal{C}}
\newcommand{\calO}{\mathcal{O}}
\newcommand{\calP}{\mathcal{P}}
\newcommand{\calQ}{\mathcal{Q}}
\newcommand{\calU}{\mathcal{U}}
\newcommand{\calX}{\mathcal{X}}
\newcommand{\calZ}{\mathcal{Z}}
\newcommand{\va}{\vct{a}}
\newcommand{\vs}{\vct{s}}
\newcommand{\vx}{\vct{x}}
\newcommand{\vy}{\vct{y}}
\newcommand{\vz}{\vct{z}}
\newcommand{\mA}{\mtx{A}}
\newcommand{\mR}{\mtx{R}}
\newcommand{\mS}{\mtx{S}}
\newcommand{\mU}{\mtx{U}}
\newcommand{\mX}{\mtx{X}}
\newcommand{\mId}{{\bm I}}
\newlength{\imgwidth}
\newcommand{\twoCol}[2]{\ifthenelse{\boolean{twoColVersion}} {#1} {#2} }
\begin{document}

\maketitle

\begin{abstract}
Incremental methods are widely utilized for solving finite-sum optimization problems in machine learning and signal processing. In this paper, we study a family of incremental methods---including incremental subgradient, incremental proximal point, and incremental prox-linear methods---for solving weakly convex optimization problems. Such a problem class covers many nonsmooth nonconvex instances that arise in engineering fields. We show that the three said incremental methods have an iteration complexity of $\calO(\varepsilon^{-4})$ for driving a natural stationarity measure to below $\varepsilon$. Moreover, we show that if the weakly convex function satisfies a sharpness condition, then all three incremental methods, when properly initialized and equipped with geometrically diminishing stepsizes, can achieve a local linear rate of convergence. Our work is the first to extend the convergence rate analysis of incremental methods from the nonsmooth convex regime to the weakly convex regime. Lastly, we conduct numerical experiments on the robust matrix sensing problem to illustrate the convergence performance of the three incremental methods.
\end{abstract}
\begin{keywords}
nonsmooth nonconvex optimization, 
incremental subgradient method,  incremental proximal-type methods, iteration complexity, sharpness, linear convergence. 
\end{keywords}

\begin{AMS}
68Q25, 65K10, 90C90, 90C26, 90C06.
\end{AMS}

\section{Introduction}
\label{submission}
Throughout this paper, we focus on the  \emph{finite-sum} optimization problem
\e
\begin{array}{c@{\quad}l}
\displaystyle \min_{\vx\in \R^n} & \displaystyle f(\vx)  = \frac{1}{m} \sum_{i = 1}^{m} f_i(\vx) \\
\st & \vx\in \calC,
\end{array}
\label{eq:problem}
\ee
where  each component function $f_i:\R^n\rightarrow \R$ is assumed to be \emph{weakly convex} and $\calC \subseteq \R^n$ is a nonempty  closed convex set.  Recall that the function $f$ is said to be $\tau$-weakly convex if $f(\cdot) + \tfrac{\tau}{2}\|\cdot\|_2^2$ is convex for some constant $\tau\ge0$~\cite{V83}.  
%
%
In  modern data processing applications, the number of components $m$ can be very large. Thus, it can be computationally expensive to utilize the full information (such as the subgradient) of $f$ in each update.  This observation is precisely one of the main motivations of incremental methods, which update the iterates using a \emph{single} component function $f_i$ rather than  \emph{all} the components of $f$.  It has been demonstrated by many experiments that depending on the value of $m$,  an incremental  method can significantly outperform  its nonincremental counterpart \cite{bertsekas2011incremental}.

Incremental methods are practical algorithms, which have a long history and are widely utilized in engineering fields.  Most notably,  incremental subgradient method and its randomly permuted version (also known as ``random shuffling'')  are  broadly employed in practice for training deep neural networks with a nonsmooth activation function; see e.g.,  \cite{bottou2009curiously,bottou2012stochastic,shamir2016without,gurbuzbalaban2019random,haochen2019random,bertsekas2018feature}. 
Despite the broad applications of incremental methods  to nonsmooth optimization problems, their theoretical properties are less explored.   The main prior results on  convergence rates of incremental methods when used to solve nonsmooth optimization problems are  based on \emph{convexity} assumptions, as stated in \cite[Section 4.1.3]{bertsekas2011incremental_a}. To the authors' knowledge, there is no convergence rate result for incremental  methods even when the objective function in \eqref{eq:problem} is  weakly convex (which can be nonsmooth and nonconvex).

Our interest in applying incremental methods to solve problem \eqref{eq:problem} stems from the fact that this problem appears widely in engineering fields such as signal processing and machine learning and \emph{almost no incremental methods have been proposed to solve it}. As a demonstration, we list below two motivating applications that are instances of problem \eqref{eq:problem}. The weak convexity of the objective functions in these applications follows from the fact that they all have the form 
\e\label{eq:composit opt}
f(\vx) = h(c(\vx))  = \frac{1}{m}  \sum_{i=1}^{m} \underbrace{  h_i(c_i(\vx)) }_{f_i(\vx)},
\ee
where $h_i : \mathbb{R}^d \rightarrow \mathbb{R}$ is a Lipschitz continuous convex function and $c_i : \mathbb{R}^n \rightarrow \mathbb{R}^d$ is a smooth mapping with Lipschitz continuous Jacobian (see, e.g., \cite[Lemma 4.2]{drusvyatskiy2018efficiency}).

\subsection{Motivating applications}\label{sec:mov} 
\paragraph{Application 1: Robust  Matrix Sensing \cite{li2018nonconvex}}  Low-rank matrices are ubiquitous in computer vision, signal processing, machine learning, and data science applications. One fundamental computational task is to recover a positive semidefinite (PSD) matrix $\mX^\star \in\R^{n\times n}$ with $\rank(\mX^\star) = r \ll n$ from a  number of corrupted linear measurements 
\e
\vy = \calA(\mX^\star) + \vs^\star,
\label{eq:rms model}\ee
where $\calA:\R^{n\times n} \rightarrow \R^m$ is a linear measurement operator consisting of a set of sensing matrices $\mA_1,\ldots,\mA_m \in \R^{n\times n}$  and $\vs^\star\in \R^{m}$ is a sparse  vector with arbitrary nonzero entries (i.e., outliers).  The work  \cite{li2018nonconvex} proposes to recover the low-rank matrix $\mX^\star$  by using a factored representation of the matrix variable \cite{burer2003nonlinear} (i.e., $\mX = \mU\mU^\top$ with $\mU\in\R^{n\times r}$) and employing a $\ell_1$-loss function to robustify the solution against outliers. This leads to the optimization formulation
\e
\min_{\mU\in \R^{n\times r}} \ \frac{1}{m}\|\vy - \calA(\mU\mU^\top) \|_1 = \frac{1}{m} \sum_{i=1}^{m}| y_i - \langle \mA_i,  \mU\mU^\top\rangle  |.
\label{eq:rms factorization}
\ee
%


\paragraph{Application 2: (Real-Valued) Robust Phase Retrieval \cite{duchi2017solving}} An important problem arising in physics, imaging science, and signal processing is phase retrieval, which aims to recover a  signal $\vx^\star \in \C^n$ from its amplitude measurements. For the purpose of illustration, let us consider a real-valued version of the problem. Suppose that the measurements are given by
\e \label{eq:rpr measure}
\vy = |\mA\vx^\star|^2 + \vs^\star,
\ee
where the operator $|\cdot|^2$ in \eqref{eq:rpr measure} is applied component-wise to its arguments. Here,  $\mA\in \R^{m\times n}$ is the measurement matrix and $\vs^\star\in \R^{m}$ is a sparse  vector with arbitrary nonzero entries (i.e., outliers). The work \cite{duchi2017solving} considers the following formulation for recovering both the sign and magnitude of $\vx^\star$:
\[ 
\min_{\vx\in\R^n} \ \frac{1}{m} \left\| \vy -  |\mA\vx|^2  \right\|_{1} = \frac{1}{m} \sum_{i=1}^{m}\left| y_i - |\langle \va_i, \vx\rangle|^2 \right|.
\] 

\subsection{Main contributions}
In this paper, we study a family of incremental methods---including \emph{incremental subgradient}, \emph{incremental proximal point}, and \emph{incremental prox-linear methods}---for solving problem \eqref{eq:problem}. Among them, the incremental prox-linear method is new to our knowledge. We develop a unified framework for analyzing the convergence rates of these methods. In particular, we show that the three incremental methods mentioned above drive a surrogate stationarity measure to zero at a rate of $\calO(k^{-1/4})$, where $k$ is the iteration index (see \Cref{thm:global convergence rate}). In addition, we show that if problem \eqref{eq:problem} possesses the so-called sharpness property (see \Cref{def:sharpness}), then the three incremental methods with properly designed geometrically diminishing stepsizes and a good initialization will converge to the set of weak sharp minima at a \emph{linear} rate (see \Cref{thm:local linear convergence}).  Our work is the first to extend the convergence rate analysis of incremental methods from the nonsmooth convex regime to the weakly convex regime, which covers a large class of nonsmooth nonconvex problems. 

Unlike the incremental aggregated gradient methods (see e.g., \cite{blatt2007convergent,gurbuzbalaban2017convergence,mokhtari2018surpassing}), which has a linear rate of convergence for smooth strongly convex optimization problems, we do not incorporate any aggregation techniques in our incremental methods. Moreover, it should be noted that the full subgradient method also requires geometrically diminishing stepsizes to guarantee linear convergence, and it is usually outperformed by its incremental counterpart numerically (see \Cref{sec:experiments} for an illustration). This further demonstrates the promise of the incremental subgradient method.

Our linear convergence result generalizes the original ones in \cite{nedic2001incremental,nedic2001convergence,bertsekas2011incremental}, which concern the incremental subgradient and proximal point methods for nonsmooth \emph{convex} optimization problems. To obtain the global sublinear convergence result, we adopt the surrogate stationarity measure for weakly convex minimization problems from \cite{davis2018stochastic,drusvyatskiy2018efficiency}.

\subsection{Related works}
\paragraph{Incremental gradient method}
A popular algorithm for solving problem \eqref{eq:problem} when the components are smooth is the incremental gradient method. Such a method has a long tradition and is extensively studied. 
The starting work dates back to \cite{widrow1960adaptive} for solving linear least-squares problems. Then, various works \cite{zhi1994analysis,mangasariany1994serial,grippo2000convergent,tseng1998incremental,solodov1998incremental}
study the convergence of incremental gradient method  with different stepsize schemes.  Most of them establish asymptotic convergence of the method without providing an explicit convergence rate. For instance, Solodov \cite{solodov1998incremental} showed that when applied to a smooth nonconvex optimization problem, every limit point of the sequence of iterates generated by the incremental gradient method with a constant stepsize bounded away from zero is an approximate stationary point.   
A more recent work \cite{gurbuzbalaban2015convergence} shows that if the objective function $f$ is strongly convex and twice continuously differentiable and the iterates are uniformly bounded, then the incremental gradient method with stepsizes diminishing at the rate of $\calO(1/k)$ will drive the distances between the iterates and the optimal solution to zero at an asymptotic rate of $\calO(1/k)$. The analysis in \cite{gurbuzbalaban2015convergence} was later adopted by several works \cite{haochen2019random,nagaraj2019sgd,gurbuzbalaban2019random} for showing the convergence of random shuffling (a randomly permuted version of incremental gradient method) when applied to smooth  strongly convex optimization problems. A popular variant named incremental aggregated gradient method \cite{blatt2007convergent,gurbuzbalaban2017convergence,mokhtari2018surpassing}---which, in each update,  evaluates  the gradient of  a single component function while keeping a memory of the most recent gradients of all the other components  to approximate the full gradient---is shown to converge linearly for  smooth  strongly convex minimization using a constant stepsize. 

\paragraph{Incremental subgradient method} The incremental subgradient method  is widely used to tackle  problem \eqref{eq:problem} when the components are nonsmooth. In the nonsmooth setting, almost all the existing convergence rate results require convexity of the components; see the comments in \cite[Section 4.1.3]{bertsekas2011incremental_a}.   As reviewed in \cite{bertsekas2011incremental_a}, the incremental subgradient method was first proposed in \cite{kibardin79}.  After that, 
Nedi\'c and Bertsekas~\cite{nedic2001incremental,nedic2001convergence}  provided  convergence results for the incremental subgradient method using several stepsize rules  when the components are convex. In particular, they proved that the algorithm converges at a rate of $\calO(1/\sqrt{k})$ in terms of the function suboptimality gap $f(\vx_k) - f^\star$ when a constant stepsize or  Polyak's dynamic stepsizes is used. Furthermore, under an additional sharpness property (see \Cref{def:sharpness}), they proved that the algorithm with Polyak's dynamic stepsizes will drive  the distances between the iterates and the optimal solution set to zero at a linear rate.
Later, the works \cite{kiwiel2004convergence,neto2010incremental,nedic2010effect} establish  convergence results   for the incremental $\varepsilon$-subgradient method, in which the exact subgradient oracle is not available. Among these,  the work \cite{nedic2010effect} also considers the effect of deterministic noise in the update.
There are also many other works studying the incremental subgradient method for the setting where the components are convex in the context of distributed optimization, sensor network optimization, etc; see, e.g.,  \cite{nedich2001,rabbat2004distributed,rabbat2005quantized,joh2010,ram2009,nedic2009distributed,wang2015incremental,iiduka2014acceleration}.  By contrast, our global iteration complexity and  linear convergence results apply to weakly convex minimization, in which the objective function can be nonsmooth and nonconvex. In addition, our linear convergence result builds on a more practical geometrically diminishing stepsize rule (due to Shor)  than the Polyak's  stepsize rule, which requires the knowledge of the optimal function value $f^\star$.

There are also  works  considering the incremental subgradient method for nonsmooth nonconvex minimization. In particular,  Solodov \cite{solodov1998error} proposed perturbed subgradient-type methods that include the incremental subgradient method as a special case and showed that the iterates generated by these methods with  diminishing stepsizes  will converge to the set of first-order stationary points of a  Lipschitz continuous and regular function (in the sense of Clarke). Despite the generality of this work, the author did not report any convergence rate result even in the case where the objective function is restricted to be convex.  It is also worth mentioning that  if the problem is convex and possesses the sharpness property, then the work \cite{solodov1998error} shows that the iterates generated by the perturbed incremental subgradient method will  converge to the set of exact global minimizers as long as the magnitude of the perturbation is smaller than the sharpness parameter.   A more recent work \cite{hu2019incremental} studies the case where the components are quasi-convex. Under the stringent assumption that all the components have a common optimal solution, it is shown that the incremental subgradient method asymptotically converges to the  optimal solution set, but there is no rate guarantee. 
By contrast, we establish an explicit rate of convergence of the incremental subgradient method when applied to  weakly convex minimization problems.

\paragraph{Incremental proximal point method}
Bertsekas \cite{bertsekas2011incremental} proposed the incremental proximal point method for solving problem \eqref{eq:problem} with convex components.  The method includes the incremental gradient, subgradient, proximal gradient,  and proximal subgradient methods as special cases. By utilizing proof techniques similar to those in \cite{nedic2001incremental,nedic2001convergence}, the author proved an $\calO(1/\sqrt{k})$ convergence rate in terms of the function suboptimality gap when a constant stepsize is used. Such a convergence rate coincides with that of the incremental subgradient method \cite{nedic2001incremental,nedic2001convergence}.  The work \cite{wang2016st} considers a slightly more general case where the constraint is an intersection of a large number of simple convex constraints and proposes an incremental projection-proximal method for solving it. It is shown in  \cite{wang2016st} that the method has an $\calO(1/\sqrt{k})$ convergence rate.  To our knowledge, the convergence  of the incremental proximal point method has only been studied in the convex setting. By contrast, we consider the incremental proximal point method  for weakly convex minimization, where the problem can be nonsmooth and nonconvex.  In addition, we elucidate the role of sharpness in the  linear convergence analysis of the incremental proximal point method, which is not addressed in \cite{bertsekas2011incremental,wang2016st}.


\paragraph{Stochastic methods}
In the past decade, stochastic methods have been extensively studied in optimization and machine learning. There is a substantial literature on stochastic variants of subgradient and proximal-type methods.  Actually, the works \cite{nedic2001incremental,nedic2001convergence,bertsekas2011incremental} mentioned earlier also discussed stochastic variants of incremental subgradient and proximal point methods, which randomly select a component function from $\{f_1, \ldots, f_m\}$ in each update rather than sweeping all the components in a cyclic order. Recently,  several works \cite{duchi2018stochastic,davis2018stochastic,davis2019stochastic,davis2019prox} have proposed different stochastic methods for solving weakly convex minimization problems. 
In particular, it is shown in \cite{davis2018stochastic} that a family of stochastic methods will drive a certain surrogate stationarity measure to zero at a rate of $\calO(k^{-1/4})$. Central to the analysis in \cite{davis2018stochastic} is the observation that the Moreau envelope of a weakly convex function can be regarded  as an approximate Lyapunov function and  the algorithm dynamics can drive the gradient of the Moreau envelope to zero.  In this paper, we adopt such a surrogate stationarity measure for analyzing the global iteration complexity of our incremental methods. In addition, the work \cite{davis2019stochastic}, which appeared on arXiv one week before our preliminary technical report \cite{li2019incremental}, establishes the linear convergence of several stochastic methods when applied to sharp weakly convex minimization problems. However, the stochastic algorithms proposed in \cite{davis2019stochastic} incorporate a restarting strategy, which results in a computationally heavy inner loop. When the problem is large-scale and/or high-dimensional, those algorithms can be very inefficient. By contrast, when solving sharp weakly convex optimization problems, our linearly convergent incremental methods do not need any restarting scheme, which is much more efficient in practice. It is worth pointing out that the necessity of investigating incremental methods when there are already results for their stochastic counterparts can be seen from three aspects: 1) The analysis of stochastic methods  relies heavily on the stochastic nature of the component function selection rules and  does not carry over to incremental methods. 2) There are applications---such as source localization problems in sensor networks and distributed empirical risk minimization \cite{gurbuzbalaban2017convergence}---where random sampling required by the stochastic methods may not be possible since the access to the components are predetermined in a deterministic order by the problem's physical nature. 3) The analysis of cyclic updating order in incremental methods can be  crucial for understanding more general component function selection rules such as the above mentioned random shuffling algorithm; see, e.g., \cite{gurbuzbalaban2019random}. 

For the sake of clarity, we list several representative results on incremental methods and compare them with our results in \Cref{table:increemental methods}.

\setlength{\arrayrulewidth}{0.4mm}
\begin{table*}[t]\caption{Summary of prior arts. The algorithm used in \cite{nedic2001convergence} is the incremental subgradient method, while that used in \cite{bertsekas2011incremental} is the incremental proximal point method. The results in this paper cover the incremental subgradient, proximal point, and  prox-linear methods. Here, $k$ denotes the iteration index and $T$ denotes the total number of iterations. In  Polyak's rule, $f^\star$  denotes the optimal function value and  $\widetilde \nabla f (\vx_{k}) \in  \partial f(\vx_{k}) $ denotes a nonzero subgradient. Furthermore, $\Theta(\vx_k)$ denotes a surrogate stationarity measure of a weakly convex function $f$ (see \eqref{eq:surrogate stationarity}), $\dist(\vx,\calZ):=\inf_{\vz \in \calZ} \|\vx-\vz\|_2$ denotes the Euclidean distance between the point $\vx$ and the non-empty closed set $\calZ$, and $\calX$ denotes the set of weak sharp minima of problem~\eqref{eq:problem} (see \Cref{def:sharpness}). We hide numerical constants in the big-oh notation for a cleaner display.}\label{table:increemental methods}
	\begin{center}
		\small{
			\setlength\tabcolsep{2.3pt}
			\setlength\extrarowheight{2.3pt}
			\begin{tabular}{c|c|c|c|c}
				\hline
				Paper&Assumptions&Stepsize&Complexity&Stationarity Measure\\
				\hline  \hline
				\cite{nedic2001convergence} & \btab{c} $f_i$ convex \\
				$f_i$ Lipschitz   \etab &  \btab{c}  Constant \\ $\calO\left(\frac{1}{\sqrt{T+1}} \right)$ \etab & $T= \calO\left( \frac{1}{\varepsilon^2}\right)$ &$\min\limits_{0 \leq k \leq T} f(\vx_{k}) -  f^\star \leq \varepsilon$ \\
				\hline
				\cite{nedic2001convergence} &\btab{c} $f_i$ convex \\
				$f_i$ Lipschitz \\ $f$ sharp   \etab &  \btab{c}  Polyak's rule\\ $\frac{f(\vx_k) - f^\star}{ \left\|\widetilde \nabla f (\vx_{k}) \right\|^2_2 } $ \etab & $T= \calO\left( \log\left( \frac{1}{\varepsilon} \right)\right) $& $\dist\left(\vx_T, \calX\right) \leq \varepsilon$\\
				\hline
				\cite{bertsekas2011incremental}&\btab{c} $f_i$ convex \\
				$f_i$ Lipschitz   \etab& \btab{c}  Constant \\ $\calO\left(\frac{1}{\sqrt{T+1}} \right)$ \etab&$T = \calO\left( \frac{1}{\varepsilon^2}\right)$ &$\min\limits_{0 \leq k \leq T} f(\vx_{k}) -  f^\star \leq \varepsilon$ \\
				\hline
				\btab{c}  This paper  \\ \Cref{thm:global convergence rate} \etab &\btab{c} $f_i$ weakly convex \\
				$f_i$ Lipschitz  \etab &\btab{c}  Constant \\ $\calO\left(\frac{1}{\sqrt{T+1}}\right) $ \& \\ diminishing \\ $\calO\left( \frac{1}{\sqrt{k}}\right)$ \etab& $T=  \calO\left( \frac{1}{\varepsilon^4}\right)$ &$\min\limits_{0 \leq k \leq T} \Theta(\vx_k) \leq \varepsilon$ \\
				\hline
				\btab{c}  This paper  \\ \Cref{thm:local linear convergence}\etab &\btab{c} $f_i$ weakly convex \\
				$f_i$ Lipschitz \\ $f$ sharp \\ Good initialization\etab &\btab{c}  Geometrically \\ diminishing \\ $\rho^k\cdot \mu_0  $ \etab& $T= \calO\left( \log\left( \frac{1}{\varepsilon} \right)\right) $ &$\dist\left(\vx_T, \calX\right) \leq \varepsilon$ \\
				\hline
			\end{tabular}
		}
	\end{center}
\end{table*}


\section{Preliminaries}
In this section, we review several elements of nonsmooth analysis and specialize them to weakly convex functions, including the subdifferential, subdifferential calculus for the composite form \eqref{eq:composit opt}, and a subgradient inequality. We then  present a family of incremental methods for solving problem \eqref{eq:problem}.

\subsection{Subdifferential,  first-order optimality condition, and a subgradient inequality}
Let $f:\R^n\rightarrow \R$ be a $\tau$-weakly convex function for some parameter $\tau\geq 0$. By \cite[Proposition 4.3]{V83}, there is a convex function $\phi:\R^n\rightarrow \R$ such that $f(\vx) = \phi(\vx) - \frac{\tau}{2}\|\vx\|^2_2$  for any $\vx\in\R^n$. According to \cite[Proposition 4.6]{V83}, we have 
\e \label{eq:subdifferential}
   \partial f(\vx) = \partial \phi (\vx) - \tau \vx,
\ee
where $\partial \phi (\vx)$ is the usual convex subdifferential of $\phi$ at $\bm{x}$. Thus, the above subdifferential of the weakly convex function $f$ is well defined. 

It is not always immediate how to explicitly calculate the subdifferential of $f$ from \eqref{eq:subdifferential}, as it may not be easy to obtain the  convex function $\phi$ associated with $f$.  Nevertheless, when $f$ takes the composite form \eqref{eq:composit opt}, we have 
\[ 
    \partial f(\vx) = \nabla c(\vx)^\top \partial h(c(\vx))
\] 
by \cite[Theorem 10.6]{rockafellar2009variational} and \cite[Proposition 4.5]{V83}. An element  $\widetilde \nabla f(\vx) \in \partial f(\vx)$ is called a subgradient of $f$ at $\vx$. Using the definition of the subdifferential of $f$ in \eqref{eq:subdifferential}, we have the following equivalent characterization of the $\tau$-weak convexity of $f$: For  all $\vx, \vy \in \R^n$,
\e
f(\vy) \geq  f(\vx) +   \langle \widetilde \nabla f(\vx), \vy-\vx\rangle -    \frac{\tau}{2} \|\vy - \vx\|^2, \quad \forall \ \widetilde \nabla f(\vx)  \in \partial f(\vx);
\label{eq:weak convexity}
\ee
see, e.g., \cite[Proposition 4.8]{V83}. 

We call $\vx\in \calC$  a \emph{stationary point} of problem \eqref{eq:problem} if it satisfies
\e \label{eq:first order optimality}
  \mathbf 0 \in \partial f(\vx) + N_{\calC} (\vx),
\ee
where  $N_{\calC}(\vx) = \{\vs\in \R^n: \langle \vs, \vy - \vx \rangle \leq 0, \ \forall \vy \in \calC \}$ is the normal cone to the convex set $\calC$ at $\vx$.

\subsection{The incremental methods}\label{sec:incre methods}
Let us now introduce a family of incremental methods---namely the \emph{incremental subgradient}, \emph{incremental proximal point}, and \emph{incremental prox-linear methods}.
At each iteration, the incremental methods update $\vx_k$ to $\vx_{k+1}$ through $m$ sequential steps by utilizing the components  $\{f_1, \ldots, f_m\}$ \emph{sequentially}. In each step, only \emph{one} component function $f_i\in \{f_1,\ldots, f_m\}$ is selected for updating. To be more specific, at the $(k+1)$-st iteration,
\begin{framed}
incremental methods start with
$\vx_{k,0} = \vx_{k}$
and then update  $\vx_{k,i} $ using $f_i$  for  $i = 1,\ldots, m$, giving
$\vx_{k+1} = \vx_{k,m}.$
\end{framed}
Let $\mu_{k}$ be the  stepsize at the $(k+1)$-st iteration. The three incremental methods are presented below. They differ from each other in the update of $\vx_{k,i}$.

\begin{enumerate}[A)]
	\item \textbf{Incremental subgradient method}
	\e \label{eq:IGD}
	\vx_{k,i} = \calP_{\calC}(\vx_{k,i -1} - \mu_{k} \widetilde \nabla f_i(\vx_{k,i -1} ))  \quad \text{with} \quad  \widetilde \nabla f_i(\vx_{k,i -1} ) \in \partial f_i(\vx_{k,i -1} ),
	\ee
	for $i = 1,\ldots, m$.
	
	\item \textbf{Incremental proximal point method}
	\e\label{eq:incremental proximal point}
	\vx_{k,i} =\argmin_{\vx\in \calC} \ \left\{ f_i(\vx) +  \frac{1}{2\mu_k} \|\vx-\vx_{k,i-1}\|_2^2 \right\},
	\ee
	for $i = 1,\ldots, m$.
	
	\item \textbf{Incremental  prox-linear method}
	
    When the objective function $f$ in problem \eqref{eq:problem} has the composite form  \eqref{eq:composit opt}, one can `inner-linearize' $c_i$ at $\vx_{k,i-1}$ as
	\e \label{eq:local linearization}
	f_i(\vx; \vx_{k,i-1}) = h_i\left(c_i(\vx_{k,i-1}) + \nabla c_i(\vx_{k,i-1})^\top (\vx - \vx_{k,i-1}) \right).
	\ee
	 Then, the incremental prox-linear method updates $\vx_{k,i}$ as
	\e\label{eq:incremental prox-linear}
	\vx_{k,i} = \argmin_{\vx\in \calC} \ \left\{ f_i(\vx; \vx_{k,i-1})  + \frac{1}{2\mu_k} \|\vx-\vx_{k,i-1}\|_2^2 \right\},
	\ee
	for $i = 1,\ldots, m$.
	It is worth noting that subproblem \eqref{eq:incremental prox-linear} admits a closed-form solution when $h_i$ is the absolute value function  and $\calC\equiv \R^n$, which is the case for all the concrete examples listed in \Cref{sec:mov}. Indeed,  in this case,  subproblem \eqref{eq:incremental prox-linear} takes the form
	\[ 
	\vx_{k,i} = \argmin_{\vx\in\R^n} \left\{  \left| \langle \va, \vx\rangle + b \right| + \frac{1}{2} \|\vx  - \vx_{k,i-1} \|_2^2\right\}
	\] 
	for some $\va\in \R^n, b\in \R$, whose solution is given by 
	\[ 
	\vx_{k,i} =  \vx_{k,i-1} - \calP_{[-1,1]} (\sigma) \va  \quad \text{with} \quad \sigma  = \frac{\langle \va, \vx_{k,i-1}\rangle + b}{\|\va\|_2^2}.
	\] 
	Here, $\calP_{[-1,1]}$ represents the projector onto the line interval $[-1,1]$; see, e.g., \cite{duchi2018stochastic}.
\end{enumerate}


\section{Global Convergence\label{sec:sublinear for weak convexity}}
In this section, we study the iteration complexity of  the incremental methods for solving problem \eqref{eq:problem}. 

\subsection{Assumptions and surrogate stationarity measure}
The incremental methods presented in the last section exploit different types of problem structure. Thus, we need different assumptions when studying the convergence behaviors of these methods. To start, we  state the assumptions needed in our analysis of the incremental subgradient and proximal point methods.
\begin{assum}[incremental subgradient and proximal point methods]\label{assum:subgradient and proximal point}
	\begin{itemize}
		\item (bounded subgradients) For  $i \in \{1,\ldots,m\}$,  there exist an open convex set $\calQ$ that contains $\calC$ and a constant $L_1 >0$ such that $\|\widetilde \nabla f_i(\vx)\|_2  \leq L_1$ for all $\vx\in \calQ$ and $\widetilde \nabla f_i(\vx) \in \partial f_i(\vx)$.
		
		\item (weak convexity) The component functions in \eqref{eq:problem} are weakly convex with parameter $\tau_1\ge0$. 
	\end{itemize}
\end{assum}

Note that the bounded subgradients assumption is standard in the analysis of incremental, stochastic, and subgradient-based algorithms; see, e.g., \cite{nedic2001convergence,bertsekas2011incremental,nedic2001incremental,davis2018stochastic,nemirovski2009robust,nesterov2013introductory}. In many important applications, the set $\calC$ is compact. In this case, the bounded subgradients assumption is automatically satisfied due to \eqref{eq:subdifferential} and the fact that $\partial \phi(\vx)$ is compact if $\calC$ is compact (see, e.g., \cite[Proposition B.24]{Bertsekas1999}). Even if $\calC$ is not compact, it may be possible to intersect it with a ball that is large enough to contain an optimal solution to problem~\eqref{eq:problem}, so that the bounded subgradients assumption is satisfied.

Though the component functions in  \eqref{eq:composit opt} are automatically weakly convex, the incremental prox-linear method \eqref{eq:incremental prox-linear}  explicitly exploits their composite structure. Therefore, in addition to weak convexity,
we need to make a slightly stronger assumption, namely quadratic approximation, when analyzing this algorithm. 
\begin{assum}[incremental prox-linear method]\label{assum:prox-linear}
	\begin{itemize}
		\item (bounded subgradients) For  $i \in \{1,\ldots,m\}$, there exist an open convex set $\calQ$ that contains $\calC$ and a constant $ L_2 >0$ such that $\|\widetilde \nabla f_i(\vx; \overline \vx)\|_2 \leq  L_2 $ for all $\vx,\overline \vx\in \calQ$ and $\widetilde \nabla f_i(\vx; \overline \vx) \in \partial f_i(\vx; \overline \vx)$.
		
		\item (quadratic approximation)  There exists a constant $\tau_2>0$ such that each component function $f_i$ satisfies
		\[
		f_i(\vx; \overline \vx) - f_i(\vx)   \leq \frac{\tau_2}{2} \| \vx - \overline \vx\|^2_2, \quad \forall \ \vx, \overline \vx \in \R^n.
		\]
	\end{itemize}	
Here,  $f_i(\vx; \overline \vx)$ is defined in \eqref{eq:local linearization}.
\end{assum}

The quadratic approximation assumption has been used in \cite{duchi2017solving} and \cite{davis2018stochastic,duchi2018stochastic} to analyze the full and stochastic prox-linear methods, respectively. It is straightforward to verify that this assumption immediately implies the $\tau_2$-weak convexity of $f_i$ by applying the convex subgradient inequality  to  $h_i$. To unify the notation in \Cref{assum:subgradient and proximal point} and \Cref{assum:prox-linear}, we define
\[ 
 L = \max\{ L_1, L_2 \} \quad \text{and} \quad \tau = \max\{ \tau_1, \tau_2 \}.
\] 

One of the main challenges in analyzing the convergence rate of an algorithm for nonsmooth nonconvex optimization is to find an appropriate stationarity measure to track the progress of the algorithm.
The recent  papers \cite{davis2018stochastic,drusvyatskiy2018efficiency} show that the Moreau envelope of a weakly convex function can be regarded  as an approximate Lyapunov function  and defines a surrogate stationarity measure for the weakly convex function at hand.  Once this Lyapunov function  is identified, the convergence analysis of subgradient-type methods for weakly convex minimization (see, e.g., \cite{davis2018stochastic})  essentially follows that for convex optimization. In this paper, we  adopt the said surrogate stationarity measure for analyzing the global iteration complexity of our incremental methods.  For completeness, we  briefly introduce the related notions of Moreau envelope and proximal mapping; see \cite[Definition 1.22]{rockafellar2009variational} for details.

For any $\lambda>0$, the Moreau envelope of $f$ is defined as
	\e\label{eq:moreau envelop}
	f_{\lambda} (\vx) :=  \min_{\vy\in \calC} \  \left\{ f(\vy) + \frac{1}{2\lambda}\|\vy-\vx\|_2^2 \right\},  \quad \vx\in \calC.
	\ee
	The corresponding proximal mapping is  defined as
	\e\label{eq:prox map}
	P_{\lambda, f} (\vx) :=  \argmin_{\vy\in \calC} \ \left\{ f(\vy) + \frac{1}{2\lambda}\|\vy-\vx\|^2_2 \right\} ,  \quad \vx\in \calC.
	\ee
	
By the subdifferential calculus  of sum of (regular) functions \cite[Corollary 10.9]{rockafellar2009variational}, the first-order optimality condition of $	P_{\lambda, f} (\vx)$ in \eqref{eq:prox map} implies that
\[
   \mathbf{0} \in \partial f(P_{\lambda, f} (\vx) ) + N_{\calC}(P_{\lambda, f} (\vx) ) + \frac{1}{\lambda} ( P_{\lambda, f} (\vx) - \vx).
\]
It follows that 
\e\label{eq:surrogate stationarity}
\boxed{    
	\begin{split}
		\dist\big(\mathbf{0}, \partial f(P_{\lambda, f} (\vx) ) + N_{\calC}(P_{\lambda, f} (\vx) ) \big) \leq \frac{1}{\lambda} \|\vx - P_{\lambda, f} (\vx) \|_2 =: \Theta(\vx).
	\end{split}
}
\ee
One can see from \eqref{eq:first order optimality} and \eqref{eq:surrogate stationarity} that if $\Theta(\vx) = 0$, then  $\vx\in \calC$ is a stationary point of problem \eqref{eq:problem}.  Thus, we use $\vx\mapsto \Theta(\vx)  $ as the surrogate stationarity measure of problem \eqref{eq:problem} and call $\vx\in \calC$ an $\varepsilon$-nearly stationary point if  $\Theta(\vx)\leq \varepsilon$.

\subsection{Several useful lemmas} 
Before stating our main convergence results, let us present several useful lemmas that will be used in our later development. 

First, we show that \eqref{eq:incremental proximal point} and \eqref{eq:incremental prox-linear} can be interpreted as certain subgradient-type update. 

\begin{lem}[subgradient-type update]\label{lem:rewrite ipp update}
	The following hold for all $k\ge0$ and $1\le i\le m$:
	\begin{enumerate}[(1)]
		\item If $\vx_{k,i}$ is generated by the incremental proximal point method \eqref{eq:incremental proximal point}, then there exists a  $\widetilde{\nabla} f_i(\vx_{k,i}) \in \partial f_i(\vx_{k,i})$ such that
		\e\label{eq:rewrite ipp update}
		\vx_{k,i}  = \calP_{\calC} (\vx_{k,i-1} - \mu_k \widetilde{\nabla} f_i(\vx_{k,i}) ).
		\ee
		\item If $\vx_{k,i}$ is generated by the incremental prox-linear method \eqref{eq:incremental prox-linear}, then there exists a  $\widetilde \nabla f_i(\vx_{k,i}; \vx_{k,i-1}) \in \partial f_i(\vx_{k,i}; \vx_{k,i-1}) $ such that
		\e\label{eq:rewrite ipl update}
		\vx_{k,i}  = \calP_{\calC} ( \vx_{k,i-1} - \mu_k  \widetilde \nabla f_i(\vx_{k,i}; \vx_{k,i-1}) ).
		\ee
	\end{enumerate}
\end{lem}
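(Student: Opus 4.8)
The plan is to derive both identities directly from the first-order optimality conditions of the respective proximal subproblems, using the fact that the objective being minimized is convex (so that the zero-subgradient condition is not only necessary but also well-behaved) and hence its subdifferential at the minimizer contains $\vzero$.

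\emph{Part (1).} First I would write the optimality condition for the proximal point update \eqref{eq:incremental proximal point}. The map $\vx \mapsto f_i(\vx) + \frac{1}{2\mu_k}\|\vx - \vx_{k,i-1}\|_2^2$ is the objective; since $\vx_{k,i}$ is its minimizer, Fermat's rule gives $\vzero \in \partial\bigl(f_i(\cdot) + \frac{1}{2\mu_k}\|\cdot - \vx_{k,i-1}\|_2^2\bigr)(\vx_{k,i})$. The quadratic term is smooth with gradient $\frac{1}{\mu_k}(\vx_{k,i} - \vx_{k,i-1})$, and the sum rule for Fréchet subdifferentials applies cleanly when one summand is smooth, so this becomes $\vzero \in \partial f_i(\vx_{k,i}) + \frac{1}{\mu_k}(\vx_{k,i} - \vx_{k,i-1})$. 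Rearranging, there exists $\widetilde\nabla f_i(\vx_{k,i}) \in \partial f_i(\vx_{k,i})$ with $\frac{1}{\mu_k}(\vx_{k,i} - \vx_{k,i-1}) = -\widetilde\nabla f_i(\vx_{k,i})$, which is exactly \eqref{eq:rewrite ipp update} after multiplying by $\mu_k$.

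\emph{Part (2).} The argument is structurally identical but applied to the surrogate $f_i(\vx; \vx_{k,i-1})$ defined in \eqref{eq:local linearization} in place of $f_i$. Since $\vx_{k,i}$ minimizes $\vx \mapsto f_i(\vx; \vx_{k,i-1}) + \frac{1}{2\mu_k}\|\vx - \vx_{k,i-1}\|_2^2$, Fermat's rule together with the smooth-plus-nonsmooth sum rule yields $\vzero \in \partial f_i(\vx_{k,i}; \vx_{k,i-1}) + \frac{1}{\mu_k}(\vx_{k,i} - \vx_{k,i-1})$ (here the subdifferential is taken in the first argument), and rearranging produces \eqref{eq:rewrite ipl update}. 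A small point worth stating is that $f_i(\cdot\,; \vx_{k,i-1})$ is convex: it is the composition of the convex function $h_i$ with the affine map $\vx \mapsto c_i(\vx_{k,i-1}) + \nabla c_i(\vx_{k,i-1})^T(\vx - \vx_{k,i-1})$, so the Fréchet subdifferential coincides with the convex subdifferential and the calculus rules are unambiguous.

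The main thing to be careful about — rather than a genuine obstacle — is justifying the subdifferential sum rule in the (possibly) nonconvex/nonsmooth setting of \eqref{eq:subdifferential}: one needs that for a smooth $g$ and arbitrary lower semicontinuous $\phi$, $\partial(\phi + g)(\vx) = \partial\phi(\vx) + \nabla g(\vx)$, which does hold for the Fréchet subdifferential by an elementary argument from the definition \eqref{eq:subdifferential} (the $\liminf$ quotient defining membership is additive under a smooth perturbation). Since in both parts the objective is in fact convex, one may alternatively invoke the standard convex subdifferential sum rule to avoid any subtlety. I would present the convex route for transparency, noting in passing that the Fréchet version suffices in general.
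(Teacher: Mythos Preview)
Your proposal is correct and follows essentially the same approach as the paper: the paper's own argument is a one-line appeal to the subdifferential sum rule $\partial\bigl(f_i(\vx)+\tfrac{1}{2\mu_k}\|\vx-\vx_{k,i-1}\|_2^2\bigr)=\partial f_i(\vx)+\tfrac{1}{\mu_k}(\vx-\vx_{k,i-1})$ (citing \cite[Exercise~8.8]{rockafellar2009variational}) combined with the first-order optimality condition at $\vx_{k,i}$. Your added discussion of why the sum rule is valid in both the Fr\'echet and convex settings is a nice elaboration of exactly the point the paper leaves implicit.
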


\begin{proof}
	We provide the proof for \eqref{eq:rewrite ipp update}. The proof of \eqref{eq:rewrite ipl update} follows a similar argument. According to the first-order optimality of $\vx_{k,i} $ in \eqref{eq:incremental proximal point} and \cite[Corollary 10.9]{rockafellar2009variational}, we have 
	\[ 
	    \frac{1}{\mu_k} (\vx_{k,i -1} - \vx_{k,i}) \in \partial f_i (\vx_{k,i}) + N_{\calC}(\vx_{k,i}),
	\] 
	which implies that
   \[
	(\vx_{k,i -1} - \mu_k \widetilde \nabla f_i(\vx_{k,i}) )  -   \vx_{k,i} \in N_{\calC}(\vx_{k,i})
	\]
	for some $\widetilde \nabla f_i(\vx_{k,i})  \in \partial f_i(\vx_{k,i})$. The above inclusion is  equivalent to \eqref{eq:rewrite ipp update} by  convexity of $\calC$.
\end{proof}

The relations  \eqref{eq:rewrite ipp update} and \eqref{eq:rewrite ipl update}
are crucial to the analysis of proximal-type algorithms  \cite{bertsekas2011incremental,bertsekas2015incremental}. It is interesting to see that the updates \eqref{eq:rewrite ipp update} and \eqref{eq:rewrite ipl update} are very similar to that of the incremental subgradient method \eqref{eq:IGD}. The only difference is that the subgradients in \eqref{eq:rewrite ipp update} and \eqref{eq:rewrite ipl update} are evaluated at $\vx_{k,i}$, while that in \eqref{eq:IGD} is evaluated at $\vx_{k,i-1}$. This observation suggests that we can follow a unified proof strategy for all  three incremental methods.

Next, we develop a preliminary recursion for the term $\|\vx_{k,i} -\vy\|_2^2$, where $\vx_{k,i}$ is the iterate generated by one of the  incremental methods and $\vy\in \calC$ is arbitrary. 

\begin{lem}[preliminary recursion]\label{lem:basic recursion}
	For any $\vy\in\calC$, $k\geq 0$, and $1\leq i\leq m$, we have
		\e\label{eq:preliminary recursion}
	\begin{aligned}
		\|\vx_{k,i} -\vy \|^2_2 & \leq \|\vx_{k,i-1} -\vy \|^2_2- 2\mu_k(f_{i}(\vz_i)-f_{i}(\vy))\\
		&\quad  + \tau \mu_k \|\vz_i - \vy \|^2_2+ \gamma \|\vx_{k,i} - \vx_{k,i-1}\|_2^\theta, 
	\end{aligned}
	\ee
	
	where  	
		\begin{enumerate}[(a)]
		\item 
		     \[
		       \vz_i = \vx_{k,i-1}, \quad \gamma = 1, \quad \theta = 2
		     \]
		if $\vx_{k,i}$ is generated by the incremental subgradient method \eqref{eq:IGD};
	             
		\item
		      \[
		     \vz_i = \vx_{k,i}, \quad \gamma = 0
		     \]
			if $\vx_{k,i}$ is generated by the incremental proximal point method \eqref{eq:incremental proximal point};

		\item  
		\[
		\vz_i = \vx_{k,i-1}, \quad \gamma = 2\mu_kL, \quad \theta = 1
		\]
		 	if $\vx_{k,i}$ is generated by  the incremental prox-linear method \eqref{eq:incremental prox-linear}.

	\end{enumerate}

%
\end{lem}

\begin{proof}
	We  decompose the error $\|\vx_{k,i-1} -\vy \|^2_2 = \|\vx_{k,i-1} - \vx_{k,i} + \vx_{k,i} - \vy \|^2_2$. Expanding the right-hand side, we have 
	\e\label{eq:preliminary recursion 1}
		\|\vx_{k,i} -\vy \|^2_2 =  \|\vx_{k,i-1} -\vy \|^2_2 - 2 \left\langle  \vx_{k,i-1} - \vx_{k,i} , \vx_{k,i} - \vy  \right\rangle  -\|\vx_{k,i} - \vx_{k,i-1}\|_2^2.
	\ee
	 
	We first show (a). 	By the first-order optimality of $\vx_{k,i} $ in \eqref{eq:IGD}, there exists an $\vs\in   N_{\calC} (\vx_{k,i}) $  such that $\vx_{k,i-1} - \vx_{k,i} = \mu_k (\widetilde\nabla f_i(\vx_{k,i-1}) + \vs) $.  Plugging this relation into \eqref{eq:preliminary recursion 1} yields 
	\e\label{eq:preliminary recursion ISG 1}
	\begin{aligned}
		\|\vx_{k,i} -\vy \|^2_2 &=   \|\vx_{k,i-1} -\vy \|^2_2 - 2\mu_k \left\langle \widetilde\nabla f_i(\vx_{k,i-1}) +\vs , \vx_{k,i} - \vy  \right\rangle   -\|\vx_{k,i} - \vx_{k,i-1}\|_2^2\\
		& \leq  \|\vx_{k,i-1} -\vy \|^2_2 - 2\mu_k \left\langle \widetilde\nabla f_i(\vx_{k,i-1}) , \vx_{k,i-1} - \vy  \right\rangle   \\
		& \quad - 2\mu_k \left\langle \widetilde\nabla f_i(\vx_{k,i-1}) + \vs , \vx_{k,i} - \vx_{k,i-1} \right\rangle   -\|\vx_{k,i} - \vx_{k,i-1}\|_2^2,\\
		&=  \|\vx_{k,i-1} -\vy \|^2_2 - 2\mu_k \left\langle \widetilde\nabla f_i(\vx_{k,i-1}) , \vx_{k,i-1} - \vy  \right\rangle  + \|\vx_{k,i} - \vx_{k,i-1}\|_2^2,
	\end{aligned}
	\ee
	where we have used the fact that $- \left\langle \vs , \vx_{k,i} - \vy  \right\rangle \leq 0$ in the inequality. 
	By using the weak convexity assumption in \Cref{assum:subgradient and proximal point} and applying the weakly convex inequality \eqref{eq:weak convexity} to \eqref{eq:preliminary recursion ISG 1}, we have 
	\[ 
	\begin{aligned}
			\|\vx_{k,i} -\vy \|^2_2 &\leq \|\vx_{k,i-1} -\vy \|^2_2- 2\mu_k(f_{i}(\vx_{k,i-1})-f_{i}(\vy)) \\
		&\quad + \tau \mu_k \|\vx_{k,i-1} - \vy \|^2_2 + \|\vx_{k,i} - \vx_{k,i-1}\|_2^2.
	\end{aligned}
	\] 
	
	We now show (b). By the first-order optimality  of $\vx_{k,i} $ in \eqref{eq:incremental proximal point} and \cite[Corollary 10.9]{rockafellar2009variational}, there exist $\widetilde\nabla f_i(\vx_{k,i}) \in \partial f_i(\vx_{k,i})$ and $\vs\in   N_{\calC} (\vx_{k,i}) $  such that $\vx_{k,i-1} - \vx_{k,i} = \mu_k (\widetilde\nabla f_i(\vx_{k,i})+ \vs) $. Invoking this relation in \eqref{eq:preliminary recursion 1} gives
	\[ 
	\begin{aligned}
		\|\vx_{k,i}& - \vy\|^2_2 =  \|\vx_{k,i-1} - \vy\|_2^2 - 2\mu_k \left\langle \widetilde{\nabla} f_i(\vx_{k,i}) + \vs, \vx_{k,i} - \vy \right\rangle - \|\vx_{k,i} - \vx_{k,i-1}\|_2^2\\
		& \leq \|\vx_{k,i-1} - \vy\|_2^2 - 2\mu_k \left\langle \widetilde{\nabla} f_i(\vx_{k,i}), \vx_{k,i} - \vy \right\rangle   - \|\vx_{k,i} - \vx_{k,i-1}\|_2^2 \\
		&\leq  \|\vx_{k,i-1} - \vy\|_2^2 - 2 \mu_k ( f_i(\vx_{k,i}) - f_i(\vy)) + \tau\mu_k \|\vx_{k,i} - \vy\|_2^2,
	\end{aligned}
	\] 
	where  the last inequality is due to the weak convexity assumption in \Cref{assum:subgradient and proximal point} and \eqref{eq:weak convexity}.
	
	Finally, we show (c).  The first-order optimality  of $\vx_{k,i} $ in \eqref{eq:incremental prox-linear} and  \cite[Corollary 10.9]{rockafellar2009variational} ensure the existence of $\widetilde{\nabla} f_i(\vx_{k,i};\vx_{k,i -1})  \in \partial f_i(\vx_{k,i};\vx_{k,i -1}) $ and $\vs\in   N_{\calC} (\vx_{k,i}) $  such that $\vx_{k,i-1} - \vx_{k,i} = \mu_k (\widetilde\nabla f_i(\vx_{k,i};\vx_{k,i -1}) + \vs) $. This, together with \eqref{eq:preliminary recursion 1}, gives
	\e\label{eq:basic recursion lemma 3}
	\begin{aligned}
			&\|\vx_{k,i} - \vy\|^2_2 \\
			&=  \|\vx_{k,i-1} - \vy\|_2^2 - 2\mu_k \left\langle \widetilde{\nabla} f_i(\vx_{k,i};\vx_{k,i -1}) + \vs, \vx_{k,i} - \vy \right\rangle  - \|\vx_{k,i} - \vx_{k,i-1}\|_2^2\\
		&\leq  \|\vx_{k,i-1} - \vy\|_2^2 - 2\mu_k \left\langle \widetilde{\nabla} f_i(\vx_{k,i};\vx_{k,i-1}), \vx_{k,i} - \vy \right\rangle   - \|\vx_{k,i} - \vx_{k,i-1}\|_2^2.
	\end{aligned}
	\ee
	 Recalling that $\vx \mapsto f_i(\vx;\vx_{k,i -1})$ is convex, we have
	\e\label{eq:basic recursion lemma 4}
	\begin{aligned}
		&- \left\langle \widetilde \nabla f_i(\vx_{k,i}; \vx_{k,i-1}),  \ \vx_{k,i} - \vy \right\rangle \leq f_i(\vy; \vx_{k,i-1}) - f_i(\vx_{k,i}; \vx_{k,i-1})\\
		&\leq f_i(\vy) + \frac{\tau}{2} \|\vx_{k,i-1} - \vy\|_2^2 - f_i(\vx_{k,i-1})  +f_i(\vx_{k,i-1})  - f_i(\vx_{k,i}; \vx_{k,i-1})\\
		&\leq - ( f_i(\vx_{k,i-1}) - f_i(\vy) ) + \frac{\tau}{2} \|\vx_{k,i-1} - \vy\|_2^2 + {L} \|\vx_{k,i} - \vx_{k,i-1} \|_2,
	\end{aligned}
	\ee
	where the second line utilizes the quadratic approximation assumption in \Cref{assum:prox-linear} and  the last inequality follows from the bounded subgradients assumption in \Cref{assum:prox-linear} and \cite[Theorem 9.13]{rockafellar2009variational}.  Substituting \eqref{eq:basic recursion lemma 4} into \eqref{eq:basic recursion lemma 3} yields
	\[ 
	\begin{aligned}
		\|\vx_{k,i} - \vy\|^2_2 & \leq  \|\vx_{k,i-1} - \vy\|_2^2 - 2\mu_k  ( f_i(\vx_{k,i-1}) - f_i(\vy) ) + \tau \mu_k   \|\vx_{k,i-1} - \vy\|_2^2 \\
		& \quad + 2\mu_k L  \|\vx_{k,i} - \vx_{k,i-1} \|_2.
	\end{aligned}
	\] 
	This completes the proof of \Cref{lem:basic recursion}.
\end{proof}

\begin{lem}[inner step length]\label{lem:inner iteration length}
	 For any $k\geq 0$, let $\{\vx_{k,i}\}_{i=1}^m$ be   the sequence  generated by any of  the three incremental methods. Then, we have 
	\e\label{eq:inner iteration length}
	\| \vx_{k,i} - \vx_{k,j}\|_2  \leq |i-j| \mu_k L, \quad \forall \ i,j\in \{1,\ldots,m\}.
	\ee
\end{lem}
\begin{proof}
	Without loss of generality, we assume that $i\geq j$. According to \Cref{lem:rewrite ipp update}, the updates of the three incremental methods can be written in a unified manner as $\vx_{k,i} = \calP_{\calC}(\vx_{k,i-1} - \mu_{k} \mS_{k,i})$, where $\mS_{k,i} = \widetilde\nabla f_i(\vx_{k,i-1})\in \partial f_i(\vx_{k,i-1})$ for the incremental subgradient method,   $\mS_{k,i} = \widetilde\nabla f_i(\vx_{k,i})\in \partial f_i(\vx_{k,i})$ for the incremental proximal point method, and  $\mS_{k,i} = \widetilde\nabla f_i(\vx_{k,i};\vx_{k,i-1})\in \partial f_i(\vx_{k,i};\vx_{k,i-1})$ for the incremental prox-linear method.  Now, we prove \eqref{eq:inner iteration length} by induction.  It is trivial to verify that \eqref{eq:inner iteration length}  is true when $i=j$.  Assuming that \eqref{eq:inner iteration length}  holds for $i = l$,  we have 
	\[ 
	\begin{aligned}
	       \| 	\vx_{k,l+1} - 	\vx_{k,j}\|_2 &= \| \calP_{\calC}(	\vx_{k,l} - \mu_k \mS_{k,l+1})     - 	\vx_{k,j}\|_2 \\
	       &\leq \|\vx_{k,l} - \mu_k \mS_{k,l+1}  - 	\vx_{k,j}\|_2 \\
	       &\leq \|\vx_{k,l}   - 	\vx_{k,j}\|_2  + \mu_k\| \mS_{k,l+1}\|_2 \\
	       &\leq (l+1 -j ) \mu_k L,
	\end{aligned}
	\]
	where the first inequality is due to the fact that the projector $\calP_{\calC}$ is nonexpansive; the last inequality is due to $\|\mS_{k,l}\|_2\leq L$, which follows from the bounded subgradients assumptions in \Cref{assum:subgradient and proximal point} and \Cref{assum:prox-linear}. 
\end{proof}

\subsection{Global sublinear convergence result}
The following proposition provides  an important recursion shared by  all three incremental methods.  
\begin{prop}[key recursion for global convergence]\label{prop:global convergence}
	Suppose that \Cref{assum:subgradient and proximal point} is valid when considering the incremental subgradient and proximal point methods, while \Cref{assum:prox-linear} is valid when considering the incremental prox-linear method. Let  $\{\vx_{k}\}_{k\geq 0}$ be  the sequence generated by any of  the three incremental methods for solving problem \eqref{eq:problem} with arbitrary initialization.  Then, for any $\lambda < \frac{1}{2\tau}$ in \eqref{eq:moreau envelop},  we have
	\e\label{eq:global convergence recursion}
	\begin{aligned}
		f_{\lambda}(\vx_{k+1}) & \leq f_{\lambda}(\vx_{k}) - \left(\frac{1}{2\lambda} -  \tau\right) \frac{1}{\lambda} m \mu_k  \| \vx_k - P_{\lambda,f}(\vx_{k}) \|^2_2 \\
		&\quad + \left( 1+ \frac{1/\lambda}{1/\lambda - \tau}\right) \frac{1}{\lambda}  m^2 \mu_k^2 L^2 + \left( 1+ \frac{1/\lambda}{1/\lambda - \tau}\right)^2  \frac{1}{\lambda}\tau  m^3  \mu_k^3 L^2.
	\end{aligned}
	\ee 
\end{prop}

\begin{proof}
	
	From the optimality of $P_{\lambda,f}(\vx_{k,i})$ in \eqref{eq:prox map} and the fact that $P_{\lambda, f} ( \bm{x}_{k,i-1} ) \in \mathcal{C}$, we have 
	\[ 
	f_{\lambda}(\vx_{k,i}) \leq f(P_{\lambda,f}(\vx_{k,i-1})) + \frac{1}{2 \lambda } \| \vx_{k,i} - P_{\lambda,f}(\vx_{k,i-1}) \|_2^2.
	\] 
	 This, together with  \Cref{lem:basic recursion} (by letting $\bm{y} = P_{\lambda, f} ( \bm{x}_{k,i-1} )$) and the definition of the Moreau envelope $f_{\lambda} ( \bm{x}_{k, i-1} ) = f ( P_{\lambda, f} ( \bm{x}_{k, i-1} ) ) + \tfrac{1}{2\lambda} \| \bm{x}_{k, i-1} - P_{\lambda, f} ( \bm{x}_{k, i-1} ) \|_2^2$, gives  
	\e \label{eq:recursion 1}
	\begin{aligned}
	f_{\lambda}(\vx_{k,i}) & \leq f_{\lambda}(\vx_{k,i-1})  -  \frac{ \mu_k}{\lambda} \big( f_i(\vz_i) -  f_i(P_{\lambda,f}(\vx_{k,i-1})) \big)  \\
	&\quad  + \frac{ \tau \mu_k}{2\lambda}    \|\vz_i - P_{\lambda,f}(\vx_{k,i-1}) \|_2^2 + \frac{\gamma}{2\lambda} \|\vx_{k,i} - \vx_{k,i-1}\|_2^\theta.
	\end{aligned}
	\ee
	 Summing the inequality in \eqref{eq:recursion 1} over $i=1,\ldots,m$ yields
	\e\label{eq:recursion 2}
	\begin{aligned}
		f_{\lambda}(\vx_{k+1}) & \leq  f_{\lambda}(\vx_{k})   -  \frac{ \mu_k}{\lambda} \underbrace{\sum_{i = 1}^m  \big( f_i(\vz_i) -  f_i(P_{\lambda,f}(\vx_{k,i-1})) \big) }_{\Delta_1} \\
		&\quad  +  \frac{ \tau \mu_k}{2\lambda}    \underbrace{ \sum_{i = 1}^m \|\vz_i - P_{\lambda,f}(\vx_{k,i-1})\|_2^2}_{\Delta_2} + \underbrace{\frac{\gamma}{2\lambda} \sum_{i = 1}^m \|\vx_{k,i} - \vx_{k,i-1}\|_2^\theta }_{\Delta_3}.
	\end{aligned}
	\ee
	We now bound $\Delta_1$, $\Delta_2$, and $\Delta_3$ in \eqref{eq:recursion 2} for different incremental methods.

	\vspace{0.2cm}
	\emph{Part I:  Incremental subgradient method}. According to \Cref{lem:basic recursion}, we have $\vz_i = \vx_{k,i-1}$, $\gamma = 1$, $\theta = 2$ in \eqref{eq:recursion 2} for the incremental subgradient method. For $\Delta_1$, we have
	\e\label{eq:Delta 1}
	\begin{aligned}
		\Delta_1 &=  \sum_{i = 1}^m [ f_i(\vx_{k,i-1}) - f_i(\vx_k)
		+ f_i(P_{\lambda,f}(\vx_{k})) \\
		&\quad -  f_i(P_{\lambda,f}(\vx_{k,i-1})) + f_i(\vx_k)  - f_i(P_{\lambda,f}(\vx_{k}))   ] \\
		& \geq  m [f(\vx_k) - f(P_{\lambda,f}(\vx_{k}))] \\
		&\quad - L \sum_{i = 1}^m \big(\| \vx_{k,i-1} - \vx_k\|_2 +   \| P_{\lambda,f}(\vx_{k,i-1}) - P_{\lambda,f}(\vx_{k})\|_2 \big) \\
		&\geq m [f(\vx_k) - f(P_{\lambda,f}(\vx_{k}))]   -L\left( 1+ \frac{1/\lambda}{1/\lambda - \tau}\right) \sum_{i = 1}^m \| \vx_{k,i-1} - \vx_k\|_2 \\
		&\geq m [f(\vx_k) - f(P_{\lambda,f}(\vx_{k}))] - \Big( 1+ \frac{1/\lambda}{1/\lambda - \tau}\Big)  \frac{m^2-m}{2}   \mu_k L^2, 
	\end{aligned}
	\ee
	where the first inequality utilizes the bounded subgradients assumption in \Cref{assum:subgradient and proximal point} and \cite[Theorem 9.13]{rockafellar2009variational}; the second inequality follows from \cite[Proposition 12.19]{rockafellar2009variational},  which states that for any $\tau$-weakly convex function $f$, $P_{\lambda,f}$ is Lipschitz continuous with constant ${(1/\lambda)}/((1/\lambda) - \tau)$ if $\lambda < \frac{1}{\tau}$;  the last inequality is because of  \Cref{lem:inner iteration length}. 
	
	Similarly,
	\e \label{eq:Delta 2 v1}
	\begin{aligned}
	\Delta_2 &= \sum_{i = 1}^m \|\vx_{k,i-1} - \vx_k + P_{\lambda,f}(\vx_{k}) - P_{\lambda,f}(\vx_{k,i-1}) +  \vx_k - P_{\lambda,f}(\vx_{k}) \|_2^2 \\
	&\leq  2\sum_{i = 1}^m \left( \left( 1+ \frac{1/\lambda}{1/\lambda - \tau}\right)^2\|\vx_{k,i-1} - \vx_k\|_2^2 + \| \vx_k - P_{\lambda,f}(\vx_{k}) \|_2^2 \right),
	\end{aligned}
	\ee
	where the last line utilizes the same Lipschitz continuous property of $P_{\lambda,f}$  as in \eqref{eq:Delta 1}.  By upper bounding $\|\vx_{k,i-1} - \vx_k\|_2$ using \Cref{lem:inner iteration length}, one can see that
	\e\label{eq:Delta 2 v2}
	\begin{aligned}
		\Delta_2 &\leq \left( 1+ \frac{1/\lambda}{1/\lambda - \tau}\right)^2  \frac{(m-1)m(2m-1)}{3} \mu_k^2  L^2 + 2 m\| \vx_k - P_{\lambda,f}(\vx_{k}) \|_2^2.
	\end{aligned}
	\ee
	
	For $\Delta_3$, \Cref{lem:inner iteration length} gives 
	\e\label{eq:Delta 3 v1}
	  \Delta_3 \leq \frac{1}{2\lambda} m \mu_{k}^2 L^2.
 	\ee
	
	Substituting \eqref{eq:Delta 1},  \eqref{eq:Delta 2 v2}, and \eqref{eq:Delta 3 v1} into \eqref{eq:recursion 2} yields 
	\e\label{eq:recursion 4}
	\begin{aligned}
		&f_{\lambda}(\vx_{k+1})  \leq f_{\lambda}(\vx_{k}) -  \frac{ m \mu_k}{\lambda}  \left( f(\vx_k) - f(P_{\lambda,f}(\vx_{k}))  - \tau\| \vx_k - P_{\lambda,f}(\vx_{k}) \|_2^2\right) \\
		&\quad + \left( 1+ \frac{1/\lambda}{1/\lambda - \tau}\right) \frac{1}{\lambda}  \frac{m^2+m}{2} \mu_k^2 L^2 + \left( 1+ \frac{1/\lambda}{1/\lambda - \tau}\right)^2 \frac{\tau}{\lambda}  \frac{(m-1)m(2m-1)}{6}  \mu_k^3 L^2,
	\end{aligned}
	\ee
	where we have enlarged the term $\frac{1}{2\lambda} m \mu_{k}^2  L^2$ in \eqref{eq:Delta 3 v1} to $\frac{1}{\lambda}\left( 1+ \frac{1/\lambda}{1/\lambda - \tau}\right) m \mu_k^2 L^2$.
	Note that
	\e \label{eq:me property}
	\begin{aligned}
	&f(\vx_k) - f(P_{\lambda,f}(\vx_{k}))  - \tau\| \vx_k - P_{\lambda,f}(\vx_{k}) \|_2^2 \\
	&= f(\vx_k) - \underbrace{\Big( f(P_{\lambda,f}(\vx_{k})) + \frac{1}{2\lambda}\| \vx_k -P_{\lambda,f}(\vx_{k})\|_2^2 \Big) }_{f_{\lambda}(\vx_{k})}  +\left(\frac{1}{2\lambda} -  \tau\right) \| \vx_k - P_{\lambda,f}(\vx_{k}) \|_2^2\\
	&\geq \left(\frac{1}{2\lambda} -  \tau\right) \| \vx_k - P_{\lambda,f}(\vx_{k}) \|_2^2,
	\end{aligned}
	\ee
	where the inequality is from the definition of the Moreau envelope. Plugging the above inequality into \eqref{eq:recursion 4}  provides 
	\e\label{eq:recursion 4-2}
	\begin{aligned}
		f_{\lambda}(\vx_{k+1})  & \leq f_{\lambda}(\vx_{k}) - \left(\frac{1}{2\lambda} -  \tau\right) \frac{ m \mu_k}{\lambda}  \| \vx_k - P_{\lambda,f}(\vx_{k}) \|_2^2 \\
		&\quad +  \left( 1+ \frac{1/\lambda}{1/\lambda - \tau}\right) \frac{1}{\lambda}    m^2 \mu_k^2 L^2 + \left( 1+ \frac{1/\lambda}{1/\lambda - \tau}\right)^2 \frac{\tau}{\lambda}   m^3  \mu_k^3 L^2.
	\end{aligned}
	\ee
	Note that we have enlarged $\frac{m^2+m}{2}$ to $m^2$ and $\frac{(m-1)m(2m-1)}{6}$ to $m^3$  in order for  all  three incremental methods to satisfy this recursion. This yields the desired result for the incremental subgradient method. 
	
	\vspace{0.2cm}
	\emph{Part II:  Incremental proximal point method}. In this case, we have $\vz_i = \vx_{k,i}$ and $\gamma = 0$ (i.e., $\Delta_3=0$) in \eqref{eq:recursion 2}. Using  similar arguments as \eqref{eq:Delta 1} and \eqref{eq:Delta 2 v1},  we have 
	\e\label{eq:Delta 1-2}
	\begin{aligned}
		\Delta_1 &\geq m [f(\vx_k) - f(P_{\lambda,f}(\vx_{k}))]   - L \sum_{i = 1}^m \| \vx_{k,i} - \vx_k\|_2  - L \frac{1/\lambda}{1/\lambda - \tau}  \sum_{i = 1}^m \| \vx_{k,i-1} - \vx_k\|_2 \\
		&\geq m [f(\vx_k) - f(P_{\lambda,f}(\vx_{k}))] - \left( 1+ \frac{1/\lambda}{1/\lambda - \tau}\right)  \frac{m^2+m}{2}   \mu_k L^2
	\end{aligned}
	\ee
	and 
	\e \label{eq:Delta 2-2}
	\begin{aligned}
		\Delta_2 &= \sum_{i = 1}^m \|\vx_{k,i} - \vx_k + P_{\lambda,f}(\vx_{k}) - P_{\lambda,f}(\vx_{k,i-1}) +  \vx_k - P_{\lambda,f}(\vx_{k}) \|_2^2 \\
		&\leq   2\sum_{i = 1}^m  \left(  \|\vx_{k,i} - \vx_k\|_2 +   \frac{1/\lambda}{1/\lambda - \tau}  \|\vx_{k,i-1} - \vx_k\|_2 \right)^2 + 2m \| \vx_k - P_{\lambda,f}(\vx_{k}) \|_2^2 \\
		&\leq \left( 1+ \frac{1/\lambda}{1/\lambda - \tau}\right)^2  \frac{m(m+1)(2m+1)}{3} \mu_k^2  L^2 + 2 m\| \vx_k - P_{\lambda,f}(\vx_{k}) \|_2^2. 
	\end{aligned}
	\ee
	Substituting \eqref{eq:Delta 1-2} and \eqref{eq:Delta 2-2} into \eqref{eq:recursion 2} and applying \eqref{eq:me property} lead to exactly the same recursion as \eqref{eq:recursion 4-2}, where we have enlarged $\frac{m^2+m}{2}$ to $m^2$ and $\frac{m(m+1)(2m+1)}{6}$ to $m^3$. 

	\vspace{0.2cm}
	\emph{Part III: Incremental prox-linear method}. According to \Cref{lem:basic recursion}, we have $\vz_i = \vx_{k,i-1}$, $\gamma = 2\mu_kL$, and $\theta = 1$ in \eqref{eq:recursion 2} for the incremental prox-linear method.  In this case, the bounds for $\Delta_1$ and $\Delta_2$ are exactly the same as \eqref{eq:Delta 1} and \eqref{eq:Delta 2 v2}, respectively. For $\Delta_3$, \eqref{eq:inner iteration length} gives $\Delta_3 \leq \frac{1}{\lambda} m \mu_{k}^2 L^2$. Substituting the bounds for $\Delta_1$, $\Delta_2$, and $\Delta_3$ into \eqref{eq:recursion 2}  and applying \eqref{eq:me property}, we  obtain   exactly the same recursion as \eqref{eq:recursion 4-2}. 
\end{proof}

Equipped with the above proposition, we are ready to establish our global convergence result.
\begin{thm}[global convergence]\label{thm:global convergence rate}
	 Under the  setting of \Cref{prop:global convergence},   the following hold:
	 
	 \begin{enumerate}[(a)]
	 	\item If we choose the constant stepsize $\mu_k = \frac{1}{m\tau\sqrt{T+1}}$ for $k\geq0$ with $T$ being the total number of iterations, then 
	 	\[
	 		\min_{0 \leq k \leq T}  \Theta^2(\vx_k) 
	 		\leq \frac{ C_1 } {\left(\frac{1}{2\lambda} -  \tau\right) \sqrt{T+1} }        +      \frac{ C_2 } {\left(\frac{1}{2\lambda} -  \tau\right) (T+1) }.
	 	\]
	 	
	 	\item If we choose the diminishing stepsizes $\mu_k = \frac{1}{m\tau \sqrt{k+1}}$ for $k \geq 0$, then 
	 	\[
	 	\min_{0 \leq k \leq T}  \Theta^2(\vx_k) 
	 	\leq \frac{ C_1 } {\left(\frac{1}{2\lambda} -  \tau\right) \sqrt{T+1} }        +      \frac{ C_2 (\ln(T+1) +1)} {\left(\frac{1}{2\lambda} -  \tau\right) (T+1) }.
	 	\]
	 \end{enumerate}
	Here,  $C_1 =  \frac{\tau}{\lambda} \left(f_{\lambda}(\vx_{0}) - \min f_{\lambda} \right)+ \left( 1+ \frac{1/\lambda}{1/\lambda - \tau}\right) \frac{1}{\lambda^2\tau} L^2$ and  $C_2 = \left( 1+ \frac{1/\lambda}{1/\lambda - \tau}\right)^2 \frac{1}{\lambda^2\tau} L^2$.
\end{thm}

\begin{proof}
	Unrolling the recursion \eqref{eq:global convergence recursion} in \Cref{prop:global convergence} and invoking the definition $ \Theta(\vx_k)  = \frac{1}{\lambda} \| \vx_k - P_{\lambda,f}(\vx_{k}) \|_2$ from \eqref{eq:surrogate stationarity}, we obtain
	\e\label{eq:recursion 5}
	\begin{aligned}
		&\left(\frac{1}{2\lambda} -  \tau\right) \left(\sum_{k = 0}^T  \mu_k\right) m \lambda \min_{0 \leq k \leq T} \Theta^2(\vx_k) 
		 \leq f_{\lambda}(\vx_{0}) - \min f_{\lambda} \\
		&\quad +  \left( 1+ \frac{1/\lambda}{1/\lambda - \tau}\right) \frac{1}{\lambda}    m^2  \left(\sum_{k = 0}^T  \mu_k^2\right) L^2
+  \left( 1+ \frac{1/\lambda}{1/\lambda - \tau}\right)^2 \frac{\tau}{\lambda}  m^3 \left(\sum_{k = 0}^T  \mu_k^3\right) L^2.
	\end{aligned}
	\ee
	Dividing $\left(\frac{1}{2\lambda} -  \tau\right) \left(\sum_{k = 0}^T  \mu_k\right) m\lambda $ on both sides of \eqref{eq:recursion 5} gives
	\e\label{eq:recursion 6}
	\begin{aligned}
		 \min_{0 \leq k \leq T} \Theta^2(\vx_k)  &\leq \frac{ \frac{1}{\lambda} \left(f_{\lambda}(\vx_{0}) - \min f_{\lambda} \right) +  \left( 1+ \frac{1/\lambda}{1/\lambda - \tau}\right) \frac{1}{\lambda^2}   m^2 \left(\sum_{k = 0}^T  \mu_k^2\right) L^2    }       {\left(\frac{1}{2\lambda} -  \tau\right) \left(\sum_{k = 0}^T  \mu_k\right) m } \\
		& \quad+ \frac{ \left( 1+ \frac{1/\lambda}{1/\lambda - \tau}\right)^2 \frac{\tau}{\lambda^2}  m^3 \left(\sum_{k = 0}^T \mu_k^3\right)  L^2}    {\left(\frac{1}{2\lambda} -  \tau\right) \left(\sum_{k = 0}^T  \mu_k\right) m}.
	\end{aligned}
	\ee
	The result in (a) follows by taking the constant stepsize $\mu_k= \frac{1}{m \tau \sqrt{N+1}}$ in \eqref{eq:recursion 6}. The result in (b) follows by taking the diminishing stepsizes $\mu_k = \frac{1}{m\tau \sqrt{k+1}}$  in \eqref{eq:recursion 6} and recognizing  that $\sum_{k=0}^{T} \frac{1}{\sqrt{k+1}} >\sqrt{T+1}$ and $\sum_{k=0}^{T} \frac{1}{k+1} < \ln(T+1) +1$. 
\end{proof}

\Cref{thm:global convergence rate} implies that the iteration complexity of the incremental methods for computing an $\varepsilon$-nearly stationary point of problem \eqref{eq:problem} is $\calO(\varepsilon^{-4})$, which matches that of their stochastic counterparts \cite{davis2018stochastic}.

\section{ Linear convergence for  sharp weakly convex optimization\label{sec:local linear convergence}}  The global sublinear convergence result discussed in the last section does not rely on any specific structure of problem \eqref{eq:problem} besides  weak convexity (or the slightly stronger quadratic approximation property in \Cref{assum:prox-linear}). Nonetheless, many applications give rise to instances of problem \eqref{eq:problem} that have additional structures, which can potentially be exploited by our incremental methods to achieve faster convergence rates. In this section, we focus on a structural property called \emph{sharpness} and show that all three incremental methods for solving problem \eqref{eq:problem} will achieve a local linear rate of convergence when the problem possesses the sharpness property.

\subsection{Sharpness: Weak sharp minima}
We start by defining the sharpness property through the notion of weak sharp minima.

\begin{defi}[sharpness; cf. \cite{burke1993weak}]\label{def:sharpness}
Consider problem \eqref{eq:problem}. We say that $\calX\subseteq \calC$ is a set of weak sharp minima for the function $f:\R^n\rightarrow \R$ over $\calC$ with parameter $\alpha>0$ if for any $\vx\in \calC$, we have 
\[ f(\vx ) - f(\vy) \geq \alpha \dist(\vx , \calX )\]
for all $\vy\in \calX$. We say that problem \eqref{eq:problem} possesses the sharpness property if it has a set of weak sharp minima.
\end{defi}

%
The notion of sharpness  plays an important role in establishing linear convergence results for subgradient-based methods and superlinear convergence results for proximal-based methods; see, e.g., \cite{davis2018subgradient,goffin1977convergence,nedic2001convergence,Shor:1985:MMN:3585}. 

As it turns out, many concrete applications give rise to sharp instances of problem \eqref{eq:problem}.
For instance, it is shown in \cite{li2018nonconvex} that the robust matrix sensing (RMS) problem \eqref{eq:rms factorization} possesses the sharpness property under certain statistical conditions.  The work \cite{li2018nonconvex} also reveals a more general implication, namely once the measurement operator $\calA$ possesses the so-called $\ell_1/\ell_2$-RIP (see \cite{li2018nonconvex} for the definition), then the associated problem will have the sharpness property. Such an implication opens up the possibility of establishing the sharpness property of a wide class of signal recovery problems, such as robust blind deconvolution \cite{charisopoulos2019composite} and robust phase retrieval problems \cite{charisopoulos2019low,duchi2017solving}.

Now, suppose that problem \eqref{eq:problem} possesses the sharpness property with parameter $\alpha>0$. Consider the case where all the component functions $f_1,\ldots,f_m$ in problem \eqref{eq:problem} satisfy the bounded subgradients assumption in \Cref{assum:subgradient and proximal point} with parameter $L_1$ on an open convex set $\calQ$ that contains $\calC$. Let 
\[ \widetilde{L}_1 :=\sup \left\{   \|\widetilde \nabla f(\vx)\|_2: \vx\in \calQ, \widetilde \nabla f(\vx)\in \partial f(\vx) \right\}. \]
It is clear that $L_1\geq \widetilde{L}_1$.  According to \cite[Theorem 9.13]{rockafellar2009variational}, $f$ is Lipschitz continuous on $\calQ$ with parameter $\widetilde L_1$. This, together with \Cref{def:sharpness} (take $\vy \in \calP_{\calX}(\vx)$), gives
\e\label{eq:alpha L}
\alpha \dist(\vx,\calX) \leq f(\vx) -  f(\vy)\leq \widetilde L_1\dist(\vx,\calX).
\ee
Consequently, we have $\alpha \le \widetilde L_1\le L_1$.

Next, consider the case where all the component functions $f_1,\ldots,f_m$ in problem \eqref{eq:problem} satisfy the bounded subgradients assumption in \Cref{assum:prox-linear} with parameter $L_2$ on an open convex set $\calQ$ that contains $\calC$. Let 
\[ \widetilde{L}_2:= \sup \left\{  \|\widetilde \nabla f(\vx;\overline \vx)\|_2: \vx,\overline \vx  \in \calQ; \widetilde \nabla f(\vx;\overline \vx)\in \partial f(\vx;\overline \vx) \right\}. \]
It is clear that $L_2 \geq \widetilde{L}_2$.  By \cite[Theorem 9.13]{rockafellar2009variational}, $\vx\mapsto f(\vx;\overline \vx)$ is Lipschitz continuous on $\calQ$ with parameter $\widetilde L_2$ for any $\overline \vx \in \calQ$, which gives
\[
   f(\overline \vx) - f(\vx;\overline \vx)  =  f(\overline \vx;\overline \vx) - f(\vx;\overline \vx)  \leq \widetilde L_2 \|\vx-\overline \vx\|_2.
\]
 According to \Cref{assum:prox-linear}, we have
\[
   f(\vx;\overline \vx) - f(\vx) \leq \frac{\tau}{2} \|\vx-\overline \vx\|_2^2.
\]
It follows that for any $\overline \vx \in \calQ$ satisfying $\overline \vx \not= \vx$,
\e\label{eq:lip}
   \frac{f(\overline \vx) - f(\vx)}{\|\vx-\overline \vx\|_2} \leq \widetilde L_2 + \frac{\tau}{2} \|\vx-\overline \vx\|_2.
\ee
Since the function $f$ is weakly convex, its subdifferential can be equivalently characterized as
\e\label{eq:frechet}
     \partial f(\vx) = \left\{  \widetilde \nabla f(\vx)\in \R^n: \liminf\limits_{\overline \vx \rightarrow \vx} \frac{f(\overline \vx)-f(\vx) - \langle 
     	\widetilde \nabla f(\vx), \overline \vx-\vx \rangle}{\|\overline \vx -\vx\|_2} \geq 0 \right\};
\ee
see, e.g.,~\cite{LSM20}. Upon taking  $\overline \vx = \vx+t \widetilde \nabla f(\vx)$ with $t \searrow 0$ and plugging \eqref{eq:lip} into \eqref{eq:frechet}, we get 
\[
    \|\widetilde \nabla f(\vx)\|_2 \leq \widetilde{ L}_2, \quad \forall\ \vx\in \calQ, \widetilde \nabla f(\vx) \in \partial f(\vx).
\]
By \cite[Theorem 9.13]{rockafellar2009variational}, this implies that $f$ is Lipschitz continuous with parameter $\widetilde L_2$ on $\calQ$. It then follows from~\eqref{eq:alpha L} that $\alpha \le \widetilde L_2 \le L_2$. 

Summarizing the above discussion, since $L = \max\{L_1, L_2\}$, we obtain the following intrinsic relation:
\e\label{eq:L alpha}
  \alpha \leq L.
\ee

\subsection{The key recursion}

The following important recursion, which is shared by all three incremental methods, allows us to exploit the sharpness property in our convergence analysis of these methods.
\begin{prop}[key recursion for linear convergence]\label{prop:local linear convergence}
	Under the setting of \Cref{prop:global convergence}, for any  $\vx^\star \in \calC$, we have
	\e\label{eq:local linear convergence recursion}
	\begin{aligned}
		\|\vx_{k+1} -\vx^\star \|^2_2  &\leq  (1+2m\tau \mu_k )\|\vx_{k} -\vx^\star \|^2_2  - 2m\mu_{k}(f(\vx_{k}) - f(\vx^\star)) \\
		&\quad+   2m^2\mu_{k}^2 L^2 + 2\tau m^3  \mu_k^3 L^2.
	\end{aligned}
	\ee
\end{prop}

\begin{proof}
	Letting $\vy = \vx^\star$ in \Cref{lem:basic recursion} and summing~\eqref{eq:preliminary recursion} over $i=1,\ldots,m$ give
	\e\label{eq:recursion 3}
	\begin{aligned}
		\|\vx_{k+1} -\vx^\star \|^2_2 &\leq  \|\vx_{k} -\vx^\star \|^2_2   - 2\mu_k   \underbrace{\sum_{i=1}^{m} (f_i(\vz_i)-f_i(\vx^\star)) }_{\Delta_1}  \\
		&\quad +  \tau  \mu_k   \underbrace{\sum_{i=1}^{m}\|\vz_i - \vx^\star\|^2_2 }_{\Delta_2} + \underbrace{ \gamma \sum_{i = 1}^m \|\vx_{k,i}-\vx_{k,i-1}\|_2^\theta}_{\Delta_3}.
	\end{aligned}
	\ee
	
 We now derive bounds for $\Delta_1$, $\Delta_2$, and $\Delta_3$ in \eqref{eq:recursion 3} for different incremental methods. 
	
	\vspace{0.2cm}
	\emph{Part I:  Incremental subgradient method}. Recall that $\vz_i = \vx_{k,i-1}$, $\gamma = 1$, $\theta = 2$ in \eqref{eq:recursion 3} for this case.  To bound $\Delta_1$ and $\Delta_2$, following the derivations in \eqref{eq:Delta 1}--\eqref{eq:Delta 3 v1},  we have 
	\e\label{eq:Delta 1 linear rate}
	\begin{split}
		\Delta_1 &=   \sum_{i=1}^{m} \left( f_i(\vx_{k,i-1})- f_i(\vx_k) + f_i(\vx_k) - f_i(\vx^\star) \right)  \\
		&\geq  m \big(f(\vx_{k}) - f(\vx^\star) \big)-   \frac{m^2-m}{2}   \mu_k L^2,
	\end{split}
	\ee
	\e\label{eq:Delta 2 prop linear rate}
	\begin{split}
		\Delta_2 &=   \sum_{i=1}^{m}\|\vx_{k,i-1} - \vx_k + \vx_{k} - \vx^\star\|^2_2 \\
		&\leq 2m  \| \vx_{k} - \vx^\star\|^2_2 +   \frac{(m-1)m(2m-1)}{3} \mu_k^2  L^2,
	\end{split}
	\ee	
	and 
	\e\label{eq:Delta 3 linear rate}
	\Delta_3 \leq  m \mu_{k}^2 L^2.
	\ee
	Substituting \eqref{eq:Delta 1 linear rate}--\eqref{eq:Delta 3 linear rate} into  \eqref{eq:recursion 3}  yields
	\e\label{eq:recursion 3-4}
	\begin{aligned}
		\|\vx_{k+1} -\vx^\star \|^2_2 & \leq  (1+2m\tau \mu_k )\|\vx_{k} -\vx^\star \|^2_2  - 2m\mu_{k}(f(\vx_{k}) - f(\vx^\star))\\
		&\quad+   2m^2\mu_{k}^2 L^2 + 2 \tau m^3  \mu_k^3 L^2.
	\end{aligned}
	\ee
	Note that we have enlarged $m^2\mu_{k}^2 L^2$ to $2m^2\mu_{k}^2 L^2$ and $\frac{(m-1)m(2m-1)}{3}$ to $2m^3$  in order for  all  three incremental methods to satisfy this recursion.

	\vspace{0.2cm}
	\emph{Part II: Incremental proximal point method}. Recall that $\vz_i = \vx_{k,i}$ and $\gamma = 0$ (i.e., $\Delta_3=0$) in \eqref{eq:recursion 3} for this case. Following the analysis in \eqref{eq:Delta 1-2}--\eqref{eq:Delta 2-2}, we have 
	\e\label{eq:Delta 1 and 2}
	\begin{aligned}
		\Delta_1 & \geq m \big(f(\vx_k) - f(\vx^\star)\big) -  \frac{m^2+m}{2}   \mu_k L^2, \\
		\Delta_2 &\leq   2 m\| \vx_k - \vx^\star \|_2^2 + \frac{m(m+1)(2m+1)}{3} \mu_k^2  L^2.
	\end{aligned}
	\ee
	Substituting  \eqref{eq:Delta 1 and 2} into \eqref{eq:recursion 3} yields the same recursion as \eqref{eq:recursion 3-4}, where we have enlarged $m^2+m$ to $2m^2$ and $\frac{m(m+1)(2m+1)}{3}$ to $2m^3$.

	\vspace{0.2cm}
	\emph{Part III: Incremental prox-linear method}. 
	Recall that $\vz_i = \vx_{k,i-1}$, $\gamma = 2\mu_kL$, $\theta = 1$ in \eqref{eq:recursion 3} for this case. Thus, the bounds for $\Delta_1$ and $\Delta_2$ will be the same as those in \eqref{eq:Delta 1 linear rate} and \eqref{eq:Delta 2 prop linear rate}, respectively. In addition, we have  $\Delta_3 \leq  2m \mu_{k}^2 L^2$ from \eqref{eq:inner iteration length}. Substituting the bounds for $\Delta_1$, $\Delta_2$, and $\Delta_3$ into  \eqref{eq:recursion 3} yields the same recursion as \eqref{eq:recursion 3-4}.
\end{proof}

\subsection{Linear convergence result}
It is known that subgradient-based methods with a constant stepsize for  nonsmooth optimization  may  never converge to the  optimal solution set.   In order to get exact convergence,  diminishing stepsizes are generally needed \cite{goffin1977convergence,Shor:1985:MMN:3585}.  The work \cite{nedic2001convergence} analyzes the incremental subgradient method using  Polyak's  dynamic stepsizes $\mu_k = (f(\vx_k) - f^\star)/\|\widetilde{\nabla} f(\vx_k)\|^2_2$ for some $\widetilde{\nabla} f(\vx_k) \in  \partial f(\vx_k)$ with  $\widetilde \nabla f (\vx_{k}) \neq \mathbf{0}$, which can be regarded as an adaptive diminishing stepsize rule. However, the implementation of  Polyak's  dynamic stepsizes  requires the knowledge of the optimal function value $f^\star$,  which makes it impractical. Motivated by previous works on full subgradient methods \cite{davis2018subgradient,goffin1977convergence,Shor:1985:MMN:3585}, in this section, we show that by using  geometrically diminishing stepsizes of the form $\mu_k = \rho^k \mu_0$, $k=0,1,\ldots$,  all  three incremental methods  have a local linear rate of convergence. Specifically, we have the following theorem.

\begin{thm}\label{thm:local linear convergence}
	Consider the setting of \Cref{prop:global convergence}.  Suppose further that $\calX$ is a set of weak sharp minima for problem \eqref{eq:problem} (see \Cref{def:sharpness}). 
	Let $\{\vx_k\}_{k\geq 0}$ be the sequence  generated by any of the three incremental methods for solving problem \eqref{eq:problem},  where the initial point $\vx_{0}$ satisfies
	\[\dist(\vx_0, \calX) <  \frac{\alpha}{\tau}\] and the stepsizes are chosen as
	\[
	\mu_{k} = \rho^k\cdot \mu_0
	\]
	with
	\[ \tau \mu_0 <   \frac{\alpha^2}{6m L^2}\left(1- \left(\max\left\{\frac{2\tau}{\alpha} \dist(\vx_0,\calX) - 1, 0\right\}\right)^2  \right),  \]
	\[  	 \rho \in [\rho_{\min}, 1), \quad \quad \rho_{\min}:= \sqrt{1 +  2m\left( \tau - \frac{\alpha}{ e_0}\right)\mu_{0}  + \frac{3m^2 L^2 }{ e_0^2}\mu_0^2},  \]
	and
	\[ e_0 := \max \left\{ \dist(\vx_0, \calX), \frac{3mL^2 \mu_0}{\alpha}\right\}.   \]
	Then, we have 
	\e\label{eq:linear rate}
	\dist(\vx_k, \calX) \leq \rho^k \cdot e_0, \quad \forall \ k\geq 0.
	\ee
\end{thm}

\begin{proof}
%
	 
	We first  show that $\rho_{\min}\in(0,1)$, so that $\rho$ is well defined. Note that $  \rho_{\min} = \sqrt{1+v(\mu_0)}$ with $
	v(\mu_0) = 2m\left( \tau - \frac{\alpha}{ e_0}\right)\mu_{0}  + \frac{3m^2 L^2 }{ e_0^2}\mu_0^2$
	being a quadratic function of $\mu_0$. To establish $\rho_{\min}\in(0,1)$, it suffices to show that  $v(\mu_0)\in(-1,0)$.    On one hand, we have $v(\mu_0) > - \frac{2m\alpha \mu_{0}}{ e_0} \geq -\frac{2\alpha^2}{3L^2} > -1$  by the definition of $e_0$ and the fact that $L\geq \alpha$ (see \eqref{eq:L alpha}).  On the other hand, we prove that $v(\mu_0)< 0$, or equivalently,
	\e\label{eq:rho min 0}
	   \mu_0<\frac{2(\alpha e_0 - \tau e_0^2)}{3mL^2}.
	\ee 
    Towards establishing \eqref{eq:rho min 0}, we consider the cases $\tau=0$  and $\tau>0$ separately.  In the case  where $\tau=0$ (i.e., $f$ is convex),  \eqref{eq:rho min 0} can be ensured by the definition of $e_0$.  In the case where $\tau>0$, since  $\mu_0< \frac{\alpha^2}{  6m\tau  L^2}$, we have $\frac{3mL^2 \mu_0}{\alpha} < \frac{\alpha}{2\tau}$.  If $\dist(\vx_{0},\calX) \geq \frac{\alpha}{2\tau}$, then $e_0 = \dist(\vx_{0},\calX) \geq \frac{\alpha}{2\tau}$ and the inequality defining $\mu_0$ reduces to 
    \[
    \mu_0<\frac{2(\alpha\dist(\vx_{0},\calX) - \tau \dist(\vx_{0},\calX)^2 )}{3mL^2} = \frac{2(\alpha e_0 - \tau e_0^2 )}{3mL^2},
    \]
    which is exactly \eqref{eq:rho min 0}.  If $\dist(\vx_{0},\calX) < \frac{\alpha}{2\tau}$,  then $e_0 <   \frac{\alpha}{2\tau}$,  $\mu_0<\frac{\alpha^2}{  6m\tau  L^2}$, and  \eqref{eq:rho min 0} is equivalent to 
	\[ 
	    2\tau e_0^2 - 2\alpha e_0 + 3m L^2 \mu_0 < 0.
	\] 
	The above is a quadratic inequality in $e_0$, which is further equivalent to 
	\e\label{eq:rho min 2}
	      \frac{\alpha - \sqrt{\alpha^2 - 6m\tau L^2\mu_0}}{2\tau} < e_0 < \frac{\alpha + \sqrt{\alpha^2 - 6m\tau L^2\mu_0}}{2\tau}.
	\ee
	The second inequality in \eqref{eq:rho min 2} is ensured by $e_0< \frac{\alpha}{2\tau}$, while the first inequality follows from the definition of $e_0$ and the fact that $\frac{\alpha - \sqrt{\alpha^2 - 6m\tau L^2\mu_0}}{2\tau} = \frac{ 6m\tau L^2\mu_0}{2\tau(\alpha + \sqrt{\alpha^2 - 6m\tau L^2\mu_0})} < \frac{3mL^2\mu_{0}}{\alpha} \leq e_0$.  Hence, we have proved that $\rho_{\min}\in(0,1)$. 
	
	We now prove \eqref{eq:linear rate} by induction. It is trivially true when $k=0$ due to the definition of $e_0$. Assuming that $\dist(\vx_k, \calX) \leq \rho^k \cdot e_0$ for some $k\geq 0$, it remains to show that $\dist(\vx_{k+1}, \calX) \leq \rho^{k+1} \cdot e_0$.  
	
	By applying \Cref{prop:local linear convergence} with $\bm{x}^\star = \mathcal{P}_{\mathcal{X}} ( \bm{x}_k )$, using the fact that    $\dist(\vx_{k+1},\calX) \leq \|\vx_{k+1} -\vx^\star \|_2$ and $\rho<1$, and utilizing the sharpness property, we have 
	\e \label{eq:basic recursion}
	\begin{aligned}
		\dist^2(\vx_{k+1},\calX)  &\leq  (1+ 2m\tau \mu_0 ) \dist^2(\vx_{k},\calX) -  2m\alpha\mu_{k} \dist(\vx_k,\calX) \\
		&\quad+   2m^2\mu_{k}^2 L^2 + 2\tau m^3  \mu_k^3 L^2.
	\end{aligned}
	\ee 
	Observe that the right-hand side of \eqref{eq:basic recursion} is a  quadratic function of $\dist(\vx_k,\calX)$. Since $0\leq \dist(\vx_k,\calX) \leq \rho^k \cdot e_0$, it follows that the right-hand side of \eqref{eq:basic recursion} achieves its maximum at $\dist(\vx_{k},\calX) = \rho^k \cdot e_0$, as one has  $\frac{2m\alpha \mu_0}{1+2m\tau\mu_0} < 2m\alpha \mu_0 < e_0$ by the definition of $e_0$ and the fact that $L\geq \alpha$ (see \eqref{eq:L alpha}). Upon plugging $\dist(\vx_{k},\calX) = \rho^k \cdot e_0$ and $\mu_k = \rho^k \mu_0$ into \eqref{eq:basic recursion}, we obtain
	\[ 
	\begin{aligned}
		\dist^2(\vx_{k+1},\calX)  &\leq \rho^{2k}  e_0^2 + 2m\tau \mu_0 \rho^{2k}  e_0^2 - 2m\alpha \mu_0 \rho^{2k} e_0\\
		&\quad  + 2m^2\mu_{0}^2 L^2\rho^{2k} + 2\tau m^3  \mu_0^3 L^2\rho^{2k}\\
		& = \rho^{2k}  e_0^2 \left( 1 + 2m \left( \tau - \frac{\alpha}{ e_0}\right)\mu_{0}  + \frac{2m^2 L^2 + 2\tau m^3  \mu_0 L^2 }{ e_0^2}\mu_0^2  \right)  \\
		&< \rho^{2k}  e_0^2 \left( 1 +  2m\left( \tau - \frac{\alpha}{ e_0}\right)\mu_{0}  + \frac{3m^2 L^2 }{ e_0^2}\mu_0^2  \right) \\
		& = \rho^{2k}  e_0^2 \cdot  \rho_{\min}^2\\
		&\leq \rho^{2k+2}  e_0^2,
	\end{aligned}	     
	\] 
	where in the first inequality we use $\rho <1$ and in the second inequality we use $\tau\mu_0 < \frac{1}{6m}$ by \eqref{eq:L alpha} and the definition of $\mu_0$.  This completes the inductive step and hence the proof of the theorem.
\end{proof}	

Before we leave this section, let us make two remarks. First, \Cref{thm:local linear convergence} provides a unified analysis of the incremental methods in both the convex  (i.e., $\tau =0$) and nonconvex  (i.e., $\tau>0$) cases. In the convex case, the initial distance $\dist(\vx_0, \calX)$ and initial stepsize $\mu_0$ can be chosen arbitrarily.   
The  diminishing stepsize rule in \Cref{thm:local linear convergence} is practical in the sense of implementation. The initial stepsize $\mu_0$ and decay factor $\rho$ can be computed  with  the knowledge of  the problem parameters $\alpha,\tau,L$ and the initial distance.  Even in the case where it is difficult to estimate these  parameters accurately, one can still obtain  proper $\mu_0$ and $\rho$ for specific applications.    This is in sharp contrast to  Polyak's dynamic stepsize rule, which requires the knowledge of the optimal function value $f^\star$. Such information is often hard to obtain in practice.

Second, we note that the three incremental methods may have different performance in practice, although \Cref{thm:local linear convergence} has the same convergence guarantee for them. In other words, \Cref{thm:local linear convergence} only provides sufficient (but possibly not necessary) conditions for the methods to converge and \eqref{eq:linear rate} only provides an upper bound on the convergence rate. In the next section, we show the (different) performance of the three incremental methods for solving the RMS problem \eqref{eq:rms factorization}. As will be seen,  the incremental prox-linear methods often performs  best. 

\section{Experiments}
\label{sec:experiments}
In this section, we conduct a series of experiments on the three incremental methods for solving the RMS problem  \eqref{eq:rms factorization}.\footnote{Our code is available at \url{https://github.com/lixiao0982/Incremental-methods}}
We list below the abbreviations of all the algorithms that appear in this section.

\vspace{0.3cm}
\begin{mdframed}[backgroundcolor=black!3,rightline=false,leftline=false,    leftmargin =0pt,
	rightmargin =0pt,
	innerleftmargin =2pt,
	innerrightmargin =2pt]
	{\small
		\begin{tabular}{ll}
			SGM: & Full subgradient method \\
			SGD:  & Stochastic subgradient method\\
			SPL:  & Stochastic prox-linear method\\
			ISG:  & Incremental subgradient method\\
			IPP:  & Incremental proximal point method \\
			IPL:  & Incremental prox-linear method\\
		\end{tabular}
	}	
\end{mdframed}

We generate $\mU^\star\in\R^{n\times r}$ and the $m$ sensing matrices $\mA_1,\ldots,\mA_m\in\R^{n\times n}$ (which defines the linear operator $\calA$)  with i.i.d.  standard Gaussian entries.  The ground-truth  low-rank matrix is then generated by $\mX^\star = \mU^\star\mU^{\star \T}$. We generate the outlier vector $\vs^\star\in\R^m$ by first randomly selecting $pm$ locations, where $p$ is the  ratio of outliers. Then,   each of the selected location is filled with an i.i.d. mean 0 and variance 10 Gaussian, while  the remaining locations are set to 0.   According to \eqref{eq:rms model}, the linear measurement $\vy \in \R^m$ is generated by     $y_i = \langle \mA_i, \mX^\star\rangle + s^\star_i$,  $ i=1,\ldots,m$.   We set  $n  = 50, r = 5, m = 5nr, p = 0.3$. As stated in \cite[Proposition 2]{li2018nonconvex}, the RMS problem possesses the sharpness property (see \Cref{def:sharpness}) under this setting. Furthermore, its  optimal solution set is precisely $\calU = \{\mU^\star \mR: \mR\in \R^{r\times r},\mR\mR^T = \mId\} $.
The bounded subgradients assumption of the RMS problem \eqref{eq:rms factorization} on any bounded subset can be verified using \cite[Propsition 4]{li2018nonconvex}. We have now checked   the assumptions   in our \Cref{thm:local linear convergence} (i.e., weak convexity, sharpness, and  bounded subgradients) for the RMS problem. 
According to \Cref{thm:local linear convergence}, in order to achieve linear convergence  for the three incremental methods, we need to utilize the geometrically diminishing stepsizes
$\mu_{k} = \rho^k \mu_0$, $k\geq 0$.
For a fair comparison, all the algorithms are initialized with the same point, whose entries are i.i.d. standard Gaussians (i.e., random initialization). It is  worth mentioning that though \Cref{thm:local linear convergence} requires a special initialization, the random initialization works as good as  carefully designed ones (see, e.g., \cite[Theorem 4]{li2018nonconvex} for an initialization strategy) in our experiments.

Recall from \Cref{thm:local linear convergence}  that the three incremental methods converge linearly at the rate of $\calO(\rho^k)$, where  $\rho$ is exactly the decay factor of the geometrically diminishing stepsizes.  Thus, the smaller decay factor $\rho$ an algorithm can choose, the faster it converges. Based on this observation, we evaluate an algorithm by numerically testing the smallest possible $\rho$ it can work with.

\subsection{Efficiency of incremental methods}
We first compare the performance of the three incremental methods studied in this paper. The result is shown in \Cref{fig:compare in. alg}.  We run all the incremental methods for 500 iterations, where the initial stepsize  $\mu_0$ and the decay factor $\rho$  of the geometrically diminishing stepsizes  are selected from $\{1/m, 30/m, 60/m, \ldots, 240/m \}$  and  $\{0.65, 0.7, 0.75, \ldots, 0.95, 0.99\}$, respectively.  For each pair of parameters  $(\rho,\mu_0)$, we average the distances to the optimal solution set of the last 5 iterates. The assessed  algorithm is deemed successful if this averaged distance is no more than $10^{-8}$ and failed otherwise.  In  \Cref{fig:RMS IGD,fig:RMS IPL,fig:RMS IPP}, each plot represents the success probability of 25 independent trials for different pairs $(\rho,\mu_0)$.  The block with white color indicates that the algorithm is $100\%$ successful under the corresponding parameter settings,  while the block with black color implies that all  25 runs  failed. In between, whiter color implies a higher successful rate. We can observe from \Cref{fig:RMS IGD,fig:RMS IPL,fig:RMS IPP} that the three incremental methods have nearly the same lower bound for the decay factor $\rho$ (i.e., $\rho=0.8$) when solving the RMS problem.  Nonetheless, IPL and IPP are much more robust with respect to the selection of the initial stepsize $\mu_0$. In our test, both IPL and IPP  work even with a very large $\mu_0$, say $\mu_0 = {10^4}/{m}$.  Such a robustness property  is crucial in practice, as it allows much less  hand-tuning.


To have a more detailed interpretation of  \Cref{fig:RMS IGD,fig:RMS IPL,fig:RMS IPP}, we  show in \Cref{fig:IGD convergence}  the convergence progress of ISG for several representative pairs of parameters  $(\rho,\mu_0)$ used in  \Cref{fig:RMS IGD} (the interpretation of \Cref{fig:RMS IPL,fig:RMS IPP} will be similar).  From the convergence plot, we can draw the following conclusions: 1) If we set $\rho = 0.65$ (say $\rho=0.65$ and $\mu_0 = 1/m$), then ISG stops progressing in a few iterations.   2) If we set $\rho = 0.99$ and choose  $\mu_0$  carefully (say $\rho=0.99$ and $\mu_0 = 1/m$), then ISG converges very slowly and drives the distance to  below $10^{-8}$ after nearly 5000 iterations.  3) If we set $\rho \in[0.8, 0.95]$ and choose $\mu_0$  appropriately (say $\rho = 0.8$ and $\mu_0 = 30/m$ or $\rho = 0.9$ and $\mu_0 = 1/m$), then ISG drives the distance to  below $10^{-8}$ within 500 iterations. We can also  observe that   smaller  $\rho$  leads to  faster convergence of ISG, which  corroborates  the results in \Cref{thm:local linear convergence}.  4) If we set $\rho \in [0.8,  0.99]$ but choose a large $\mu_0$ (say $\rho=0.95$ and $\mu_0 = 60/m$), IGD diverges.  It is interesting to mention  that in our test,  IPL and IPP will not diverge even with a large  $\mu_0$ (say $\rho \in [0.8,  0.99]$ and $\mu_0\in [1/m,10^4/m]$). This  indicates that IPL and IPP are much more robust  than ISG with respect to the choice of the initial stepsize $\mu_0$.

\begin{figure}[!htp]
	\begin{subfigure}{0.49\linewidth}
		\centerline{
			\includegraphics[width=1\textwidth]{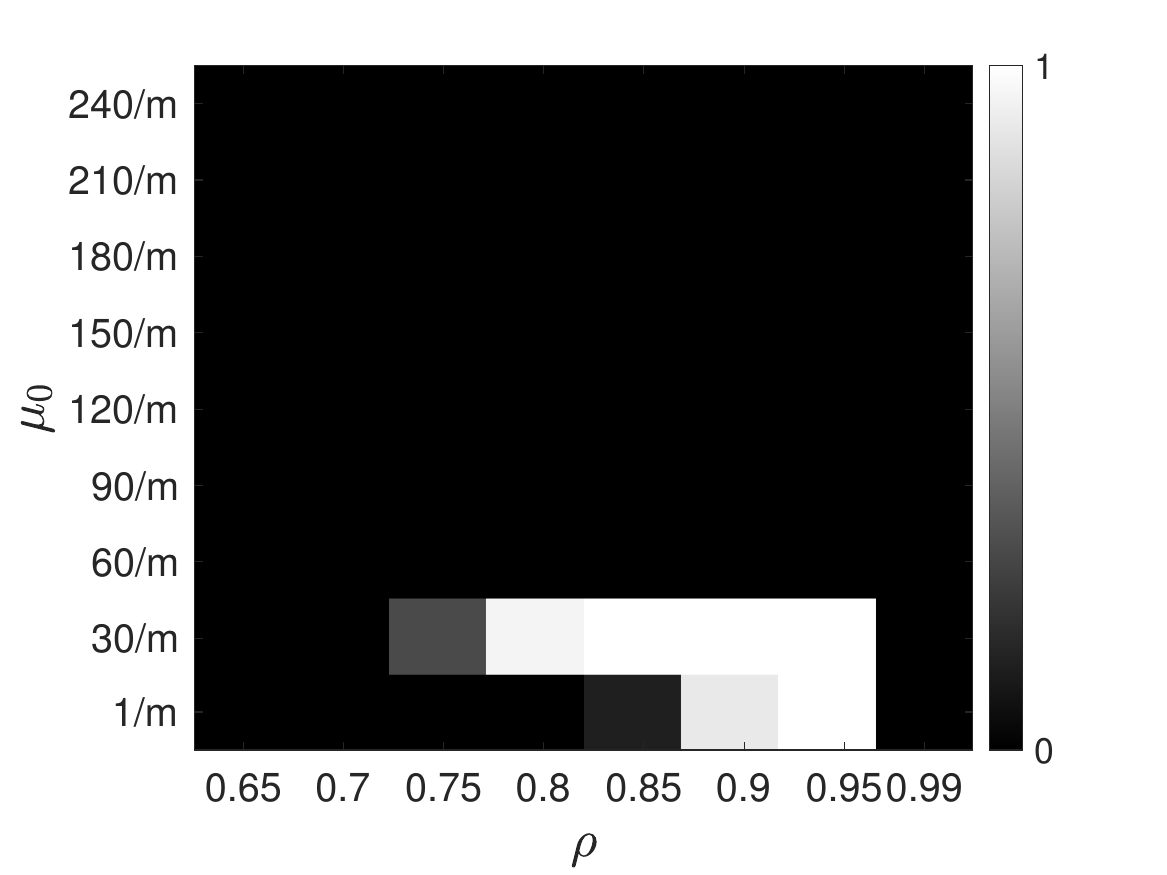}}
		\caption{ISG \label{fig:RMS IGD}}
	\end{subfigure}
	\begin{subfigure}{0.49\linewidth}
		\centerline{
			\includegraphics[width=1\textwidth]{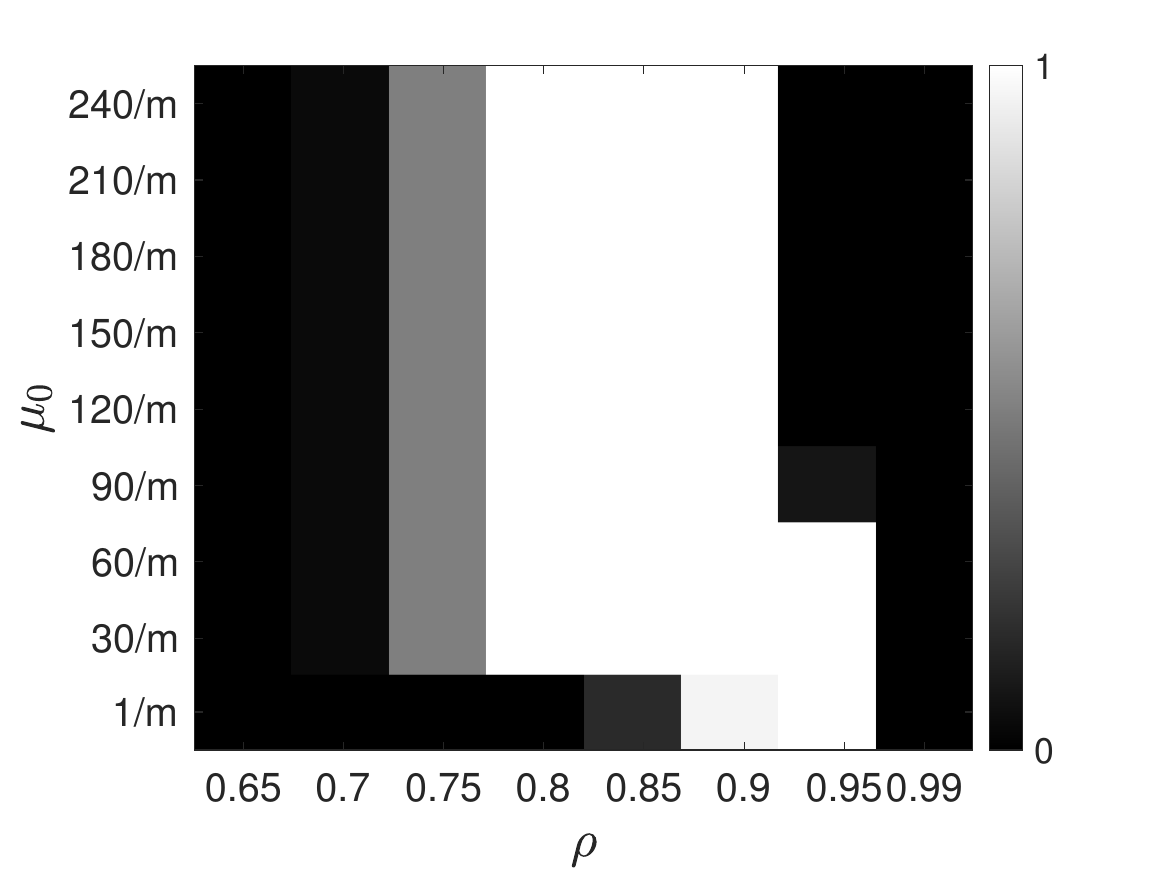}}
		\caption{IPL \label{fig:RMS IPL}}
	\end{subfigure}
\vfill
	\begin{subfigure}{0.49\linewidth}
		\centerline{
			\includegraphics[width=1\textwidth]{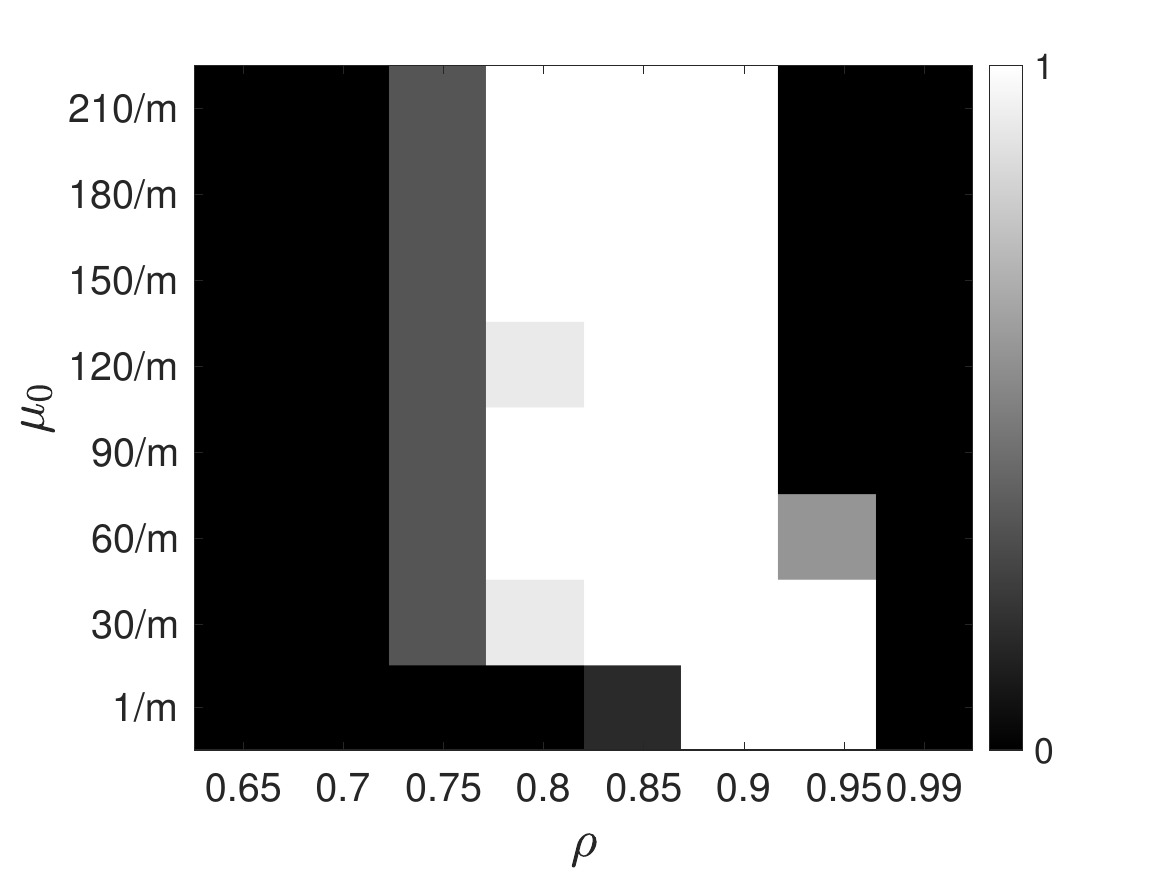}}
		\caption{IPP \label{fig:RMS IPP}}
	\end{subfigure}
	\begin{subfigure}{0.49\linewidth}
		\centerline{
			\includegraphics[width=1\textwidth]{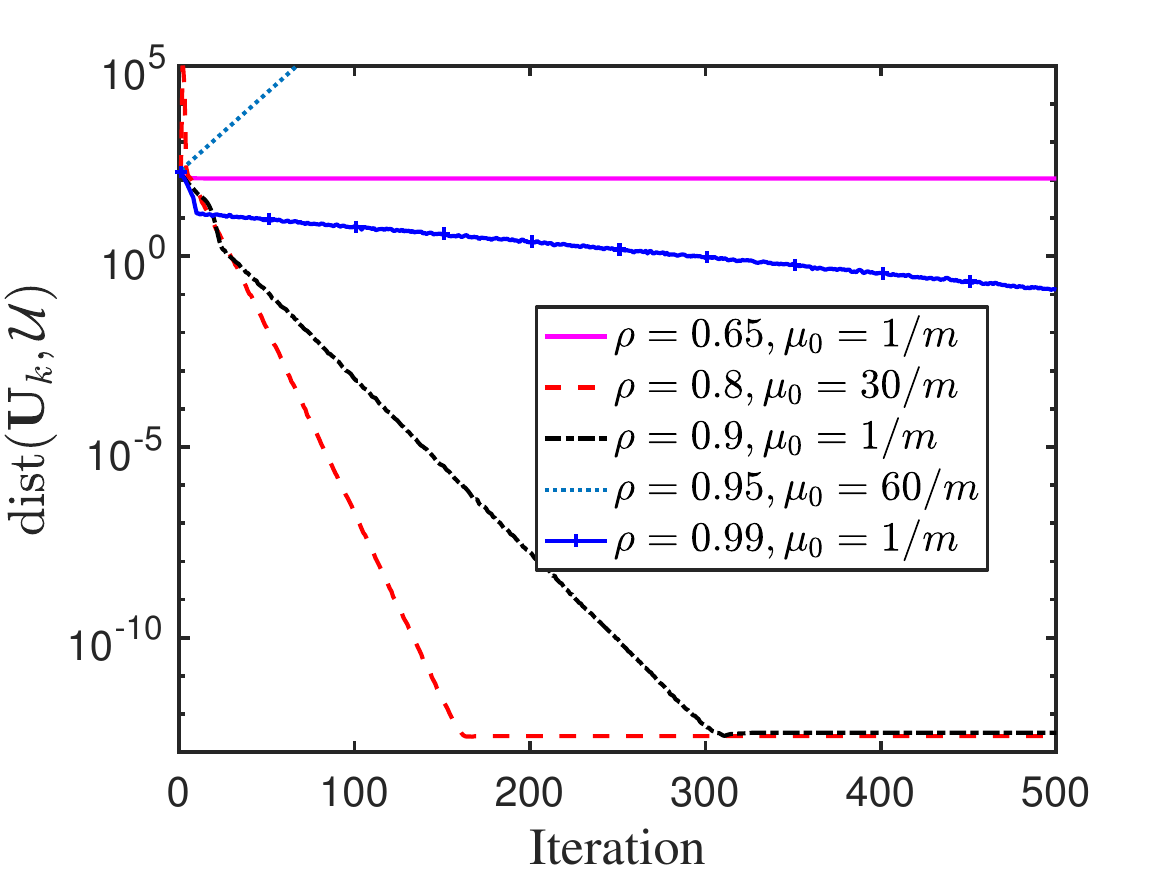}}
		\caption{ISG: convergence plots  \label{fig:IGD convergence}}
	\end{subfigure}
	\caption{\small (a)-(c): Performance of the three incremental methods for solving the RMS problem by varying $\rho$ and $\mu_0$. (d) The convergence progress of ISG for several representative pairs of parameters $(\rho,\mu_0)$.
	}\label{fig:compare in. alg}
\end{figure}

\begin{figure}[!htp]
	\begin{subfigure}{0.32\linewidth}
		\centerline{
			\includegraphics[width=1\textwidth]{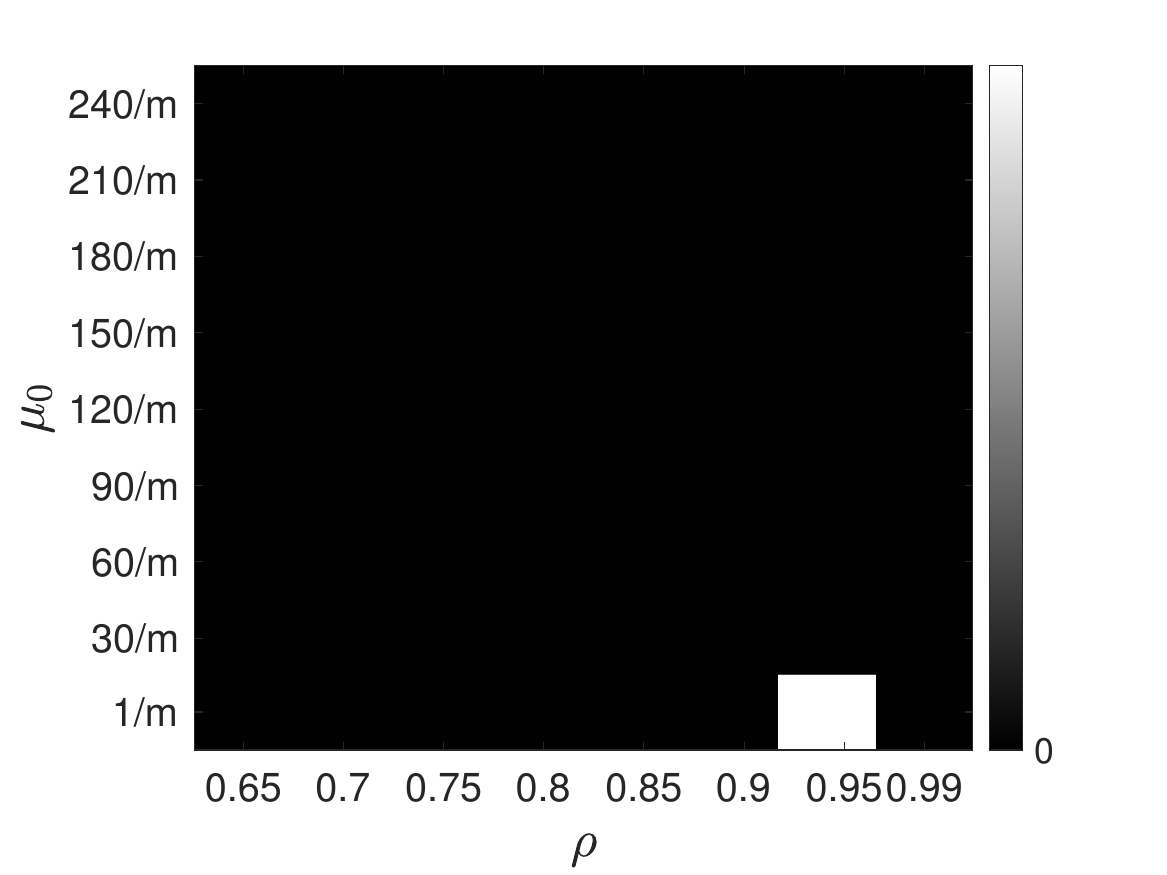}}
		\caption{SGM \label{fig:RMS SGM}}
	\end{subfigure}
	\begin{subfigure}{0.32\linewidth}
		\centerline{
			\includegraphics[width=1\textwidth]{fig/RMS_IGD_algparam}}
		\caption{ISG \label{fig:RMS IGD 2}}
	\end{subfigure}
	\begin{subfigure}{0.32\linewidth}
		\centerline{
			\includegraphics[width=1\textwidth]{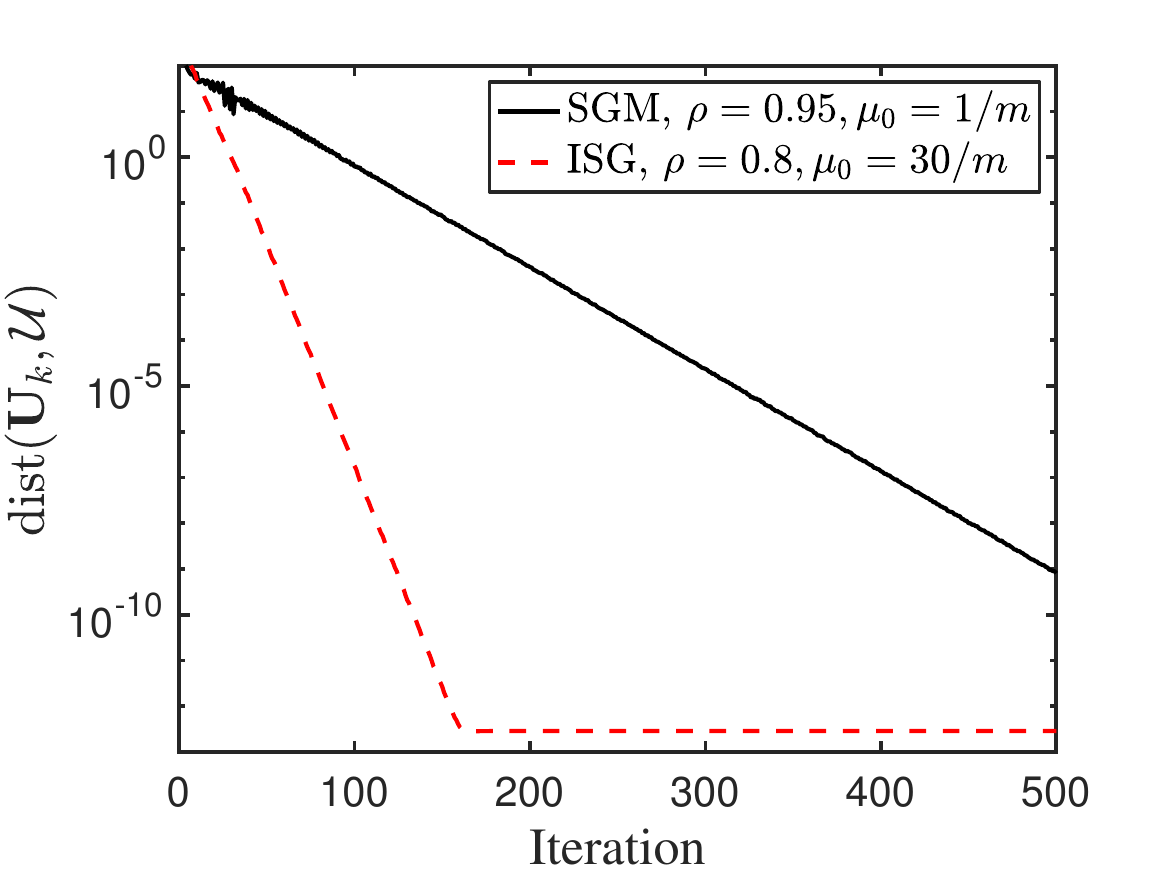}}
		\caption{ISG vs. SGM \label{fig:RMS IGD vs SGM}}
	\end{subfigure}
	\vfill
	\begin{subfigure}{0.32\linewidth}
		\centerline{
			\includegraphics[width=1\textwidth]{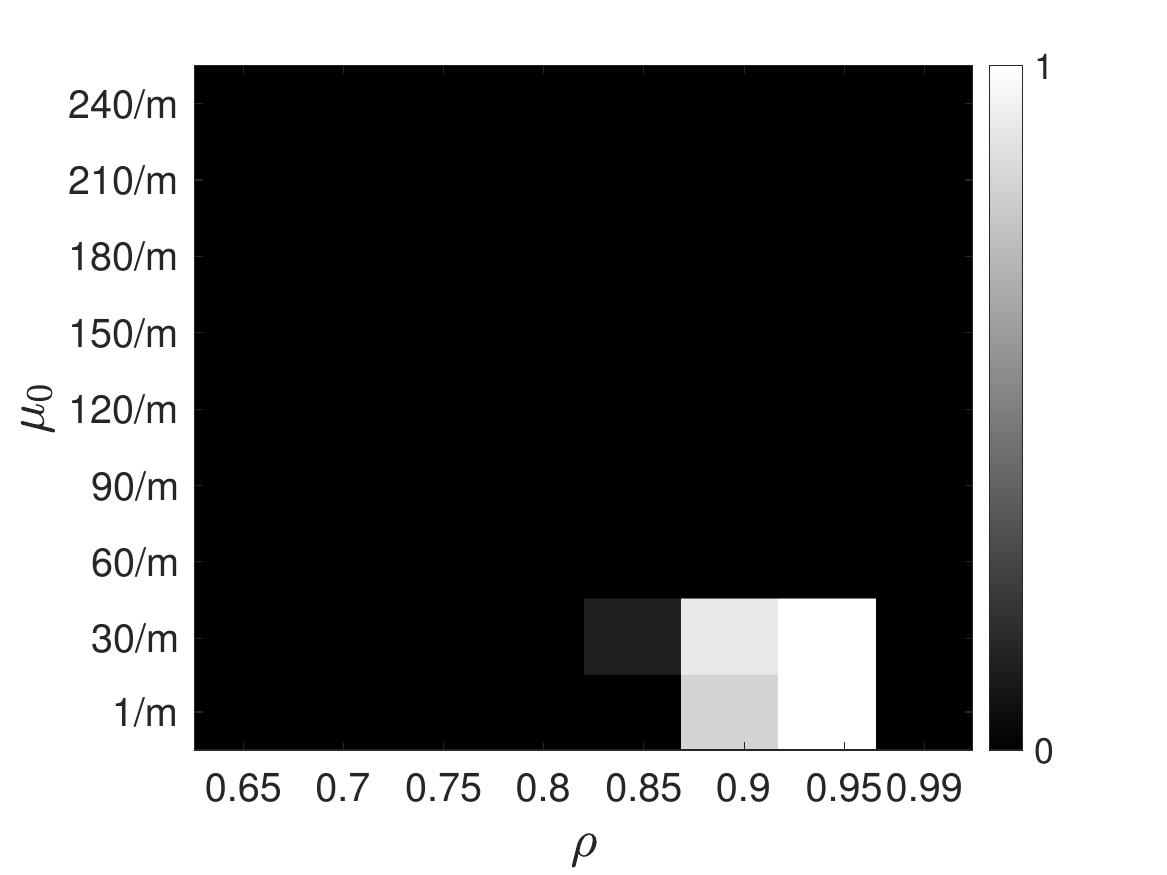}}
		\caption{SGD \label{fig:RMS SGD}}
	\end{subfigure}
	\begin{subfigure}{0.32\linewidth}
		\centerline{
			\includegraphics[width=1\textwidth]{fig/RMS_IGD_algparam}}
		\caption{ISG \label{fig:RMS IGD 3}}
	\end{subfigure}
	\begin{subfigure}{0.32\linewidth}
		\centerline{
			\includegraphics[width=1\textwidth]{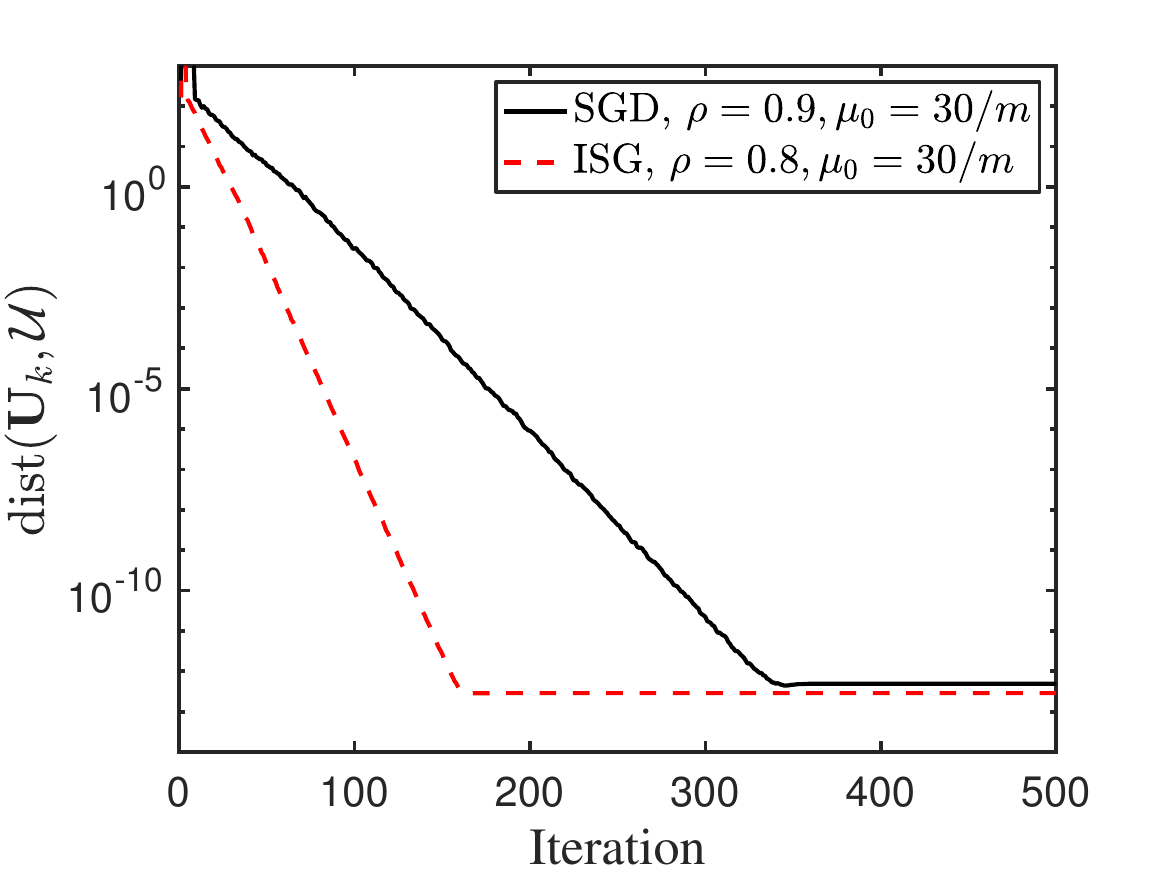}}
		\caption{ISG vs. SGD \label{fig:RMS IGD vs SGD}}
	\end{subfigure}
	\vfill
	\begin{subfigure}{0.32\linewidth}
		\centerline{
			\includegraphics[width=1\textwidth]{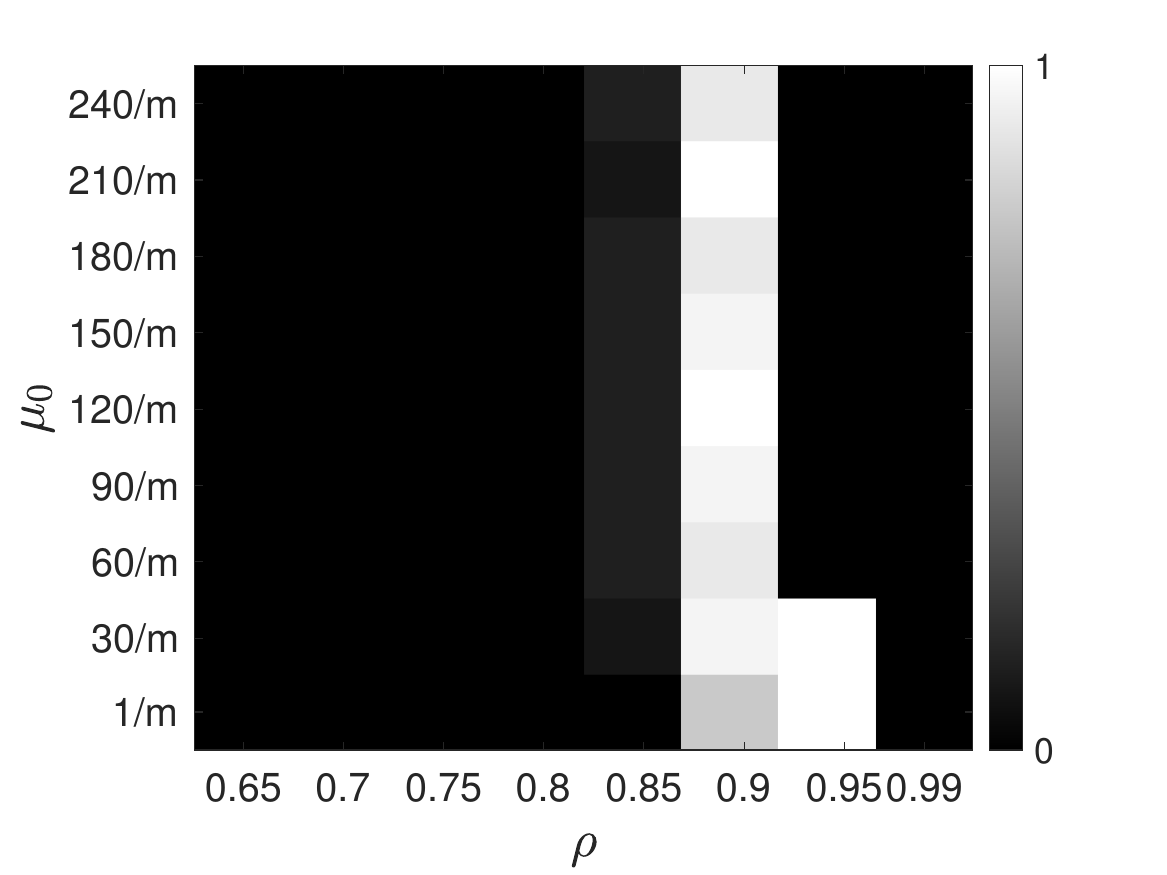}}
		\caption{SPL \label{fig:RMS SPL}}
	\end{subfigure}
	\begin{subfigure}{0.32\linewidth}
		\centerline{
			\includegraphics[width=1\textwidth]{fig/RMS_IPL_algparam}}
		\caption{IPL \label{fig:RMS IPL 2}}
	\end{subfigure}
	\begin{subfigure}{0.32\linewidth}
		\centerline{
			\includegraphics[width=1\textwidth]{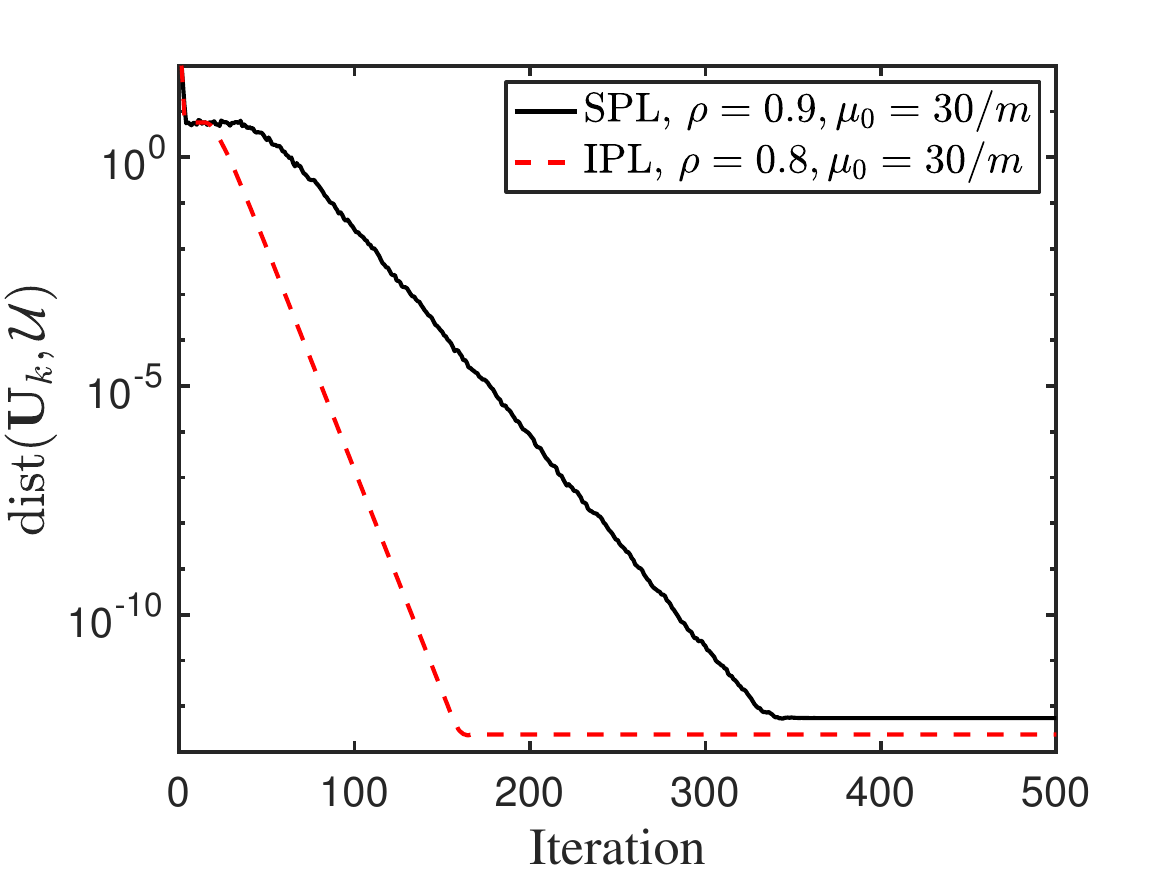}}
		\caption{IPL vs. SPL \label{fig:RMS IPL vs SPL}}
	\end{subfigure}
	\caption{\small Comparison between incremental methods and SGM and stochastic methods for solving the RMS  problem.  }\label{fig:incre vs SGM sto}
\end{figure}

We end this subsection by commenting on IPL. It can be observed from  \Cref{fig:RMS IPL,fig:RMS IPP} that IPL and IPP have nearly the same performance for different choices of the pair of stepsize parameters $(\rho,\mu_0)$.   However, IPP requires an inner solver for  its subproblem \eqref{eq:incremental proximal point},  which renders it computationally inefficient.  By contrast, there exists a closed-form solution for the subproblem \eqref{eq:incremental prox-linear} of  IPL  when $h_i$ in \eqref{eq:local linearization} is the absolute value function, which is the case for the RMS problem. This  implies that the update of IPL is as efficient as that of ISG and much more efficient than the update  of IPP. Recall that IPL is much more robust than ISG with respect to the choice of the initial stepsize $\mu_0$. Thus, we recommend using IPL whenever its subproblem has a closed-form solution. In the sequel, we will omit the results of IPP as its performance is almost the same as IPL.

\subsection{Comparison with SGM and stochastic methods}
In this subsection, we compare the incremental methods with  SGM and stochastic methods.  We apply the same geometrically diminishing stepsizes  to both SGM and stochastic methods. All the  experimental settings remain the same as those in the last subsection.   The result is displayed in  \Cref{fig:incre vs SGM sto}.   

 One can observe from \Cref{fig:RMS SGM,fig:RMS IGD 2} that ISG can tolerate a much smaller choice of $\rho$ than SGM. The smallest workable $\rho$ for SGM is 0.93, while ISG can set $\rho = 0.8$; see \Cref{fig:RMS IGD vs SGM} for a  comparison of the convergence speeds of ISG and SGM when their smallest workable decay factors $\rho$ are used. 

In the comparison with stochastic methods,  if a stochastic method runs for $T$ iterations,  we actually count the iteration number  as ${T}/{m}$.  As observed from \Cref{fig:RMS SGD,fig:RMS IGD 3} and \Cref{fig:RMS SPL,fig:RMS IPL 2}, incremental methods can  choose a smaller $\rho$ than their stochastic counterparts. In particular, incremental methods work with $\rho = 0.8$, while the smallest workable $\rho$ for the  stochastic methods is 0.9. \Cref{fig:RMS IGD vs SGD,fig:RMS IPL vs SPL} compare the convergence speeds of incremental and stochastic methods using the smallest possible $\rho$ of each algorithm.  The superiority of the incremental methods is clear.

\section{Conclusion}
In this work, we introduced a family of incremental methods for solving the weakly convex minimization problem \eqref{eq:problem} and developed a unified framework for analyzing their convergence rates. In particular, we showed that these incremental methods have an iteration complexity of $\calO(\varepsilon^{-4})$ for computing an $\varepsilon$-nearly stationary point of problem \eqref{eq:problem}, and that when applied to a sharp instance of problem~\eqref{eq:problem} using a good initialization and geometrically diminishing stepsizes, they converge linearly to an optimal solution. Our work is the first to extend the convergence rate analysis of incremental methods from the nonsmooth convex regime to the weakly convex regime, which covers a large class of nonsmooth nonconvex problems. We also conducted a series of numerical experiments on the RMS problem~\eqref{eq:rms factorization} to demonstrate the efficacy of the incremental methods. We found that the incremental subgradient method has the simplest implementation, while the incremental proximal point and prox-linear methods are more robust with respect to the choice of the initial stepsize. Moreover, our numerical results suggested that the incremental prox-linear method would be the method of choice when its subproblem can be solved analytically, which is the case for the two motivating applications in \Cref{sec:mov}.  Our experiments clearly showed the superiority of incremental methods over their stochastic counterparts or the full subgradient method, which explains in part why incremental methods are so widely used in practice.

\bibliography{nonconvex}

\begin{thebibliography}{10}

\bibitem{Bertsekas1999}
{\sc D.~P. Bertsekas}, {\em Nonlinear Programming}, Athena Scientific, 1999.

\bibitem{bertsekas2011incremental_a}
{\sc D.~P. Bertsekas}, {\em Incremental gradient, subgradient, and proximal
  methods for convex optimization: A survey}, Optimization for Machine
  Learning, 2010 (2011), p.~3.

\bibitem{bertsekas2011incremental}
{\sc D.~P. Bertsekas}, {\em Incremental proximal methods for large scale convex
  optimization}, Mathematical programming, 129 (2011), p.~163.

\bibitem{bertsekas2015incremental}
{\sc D.~P. Bertsekas}, {\em Incremental aggregated proximal and augmented
  {L}agrangian algorithms}, arXiv preprint arXiv:1509.09257,  (2015).

\bibitem{bertsekas2018feature}
{\sc D.~P. Bertsekas}, {\em Feature-based aggregation and deep reinforcement
  learning: A survey and some new implementations}, arXiv preprint
  arXiv:1804.04577,  (2018).

\bibitem{blatt2007convergent}
{\sc D.~Blatt, A.~O. Hero, and H.~Gauchman}, {\em A convergent incremental
  gradient method with a constant step size}, SIAM Journal on Optimization, 18
  (2007), pp.~29--51.

\bibitem{bottou2009curiously}
{\sc L.~Bottou}, {\em Curiously fast convergence of some stochastic gradient
  descent algorithms}, in Proceedings of the Symposium on Learning and Data
  Science, Paris, 2009.

\bibitem{bottou2012stochastic}
{\sc L.~Bottou}, {\em Stochastic gradient descent tricks}, in Neural Networks:
  Tricks of the Trade, Springer, 2012, pp.~421--436.

\bibitem{burer2003nonlinear}
{\sc S.~Burer and R.~D. Monteiro}, {\em A nonlinear programming algorithm for
  solving semidefinite programs via low-rank factorization}, Mathematical
  Programming, 95 (2003), pp.~329--357.

\bibitem{burke1993weak}
{\sc J.~V. Burke and M.~C. Ferris}, {\em Weak sharp minima in mathematical
  programming}, SIAM Journal on Control and Optimization, 31 (1993),
  pp.~1340--1359.

\bibitem{charisopoulos2019low}
{\sc V.~Charisopoulos, Y.~Chen, D.~Davis, M.~D{\'\i}az, L.~Ding, and
  D.~Drusvyatskiy}, {\em Low-rank matrix recovery with composite optimization:
  Good conditioning and rapid convergence}.
\newblock Accepted for publication in \emph{Foundations of Computational
  Mathematics}, 2021.

\bibitem{charisopoulos2019composite}
{\sc V.~Charisopoulos, D.~Davis, M.~D{\'\i}az, and D.~Drusvyatskiy}, {\em
  Composite optimization for robust blind deconvolution}, arXiv preprint
  arXiv:1901.01624,  (2019).

\bibitem{davis2018stochastic}
{\sc D.~Davis and D.~Drusvyatskiy}, {\em Stochastic model-based minimization of
  weakly convex functions}, SIAM Journal on Optimization, 29 (2019),
  pp.~207--239.

\bibitem{davis2019stochastic}
{\sc D.~Davis, D.~Drusvyatskiy, and V.~Charisopoulos}, {\em Stochastic
  algorithms with geometric step decay converge linearly on sharp functions},
  arXiv preprint arXiv:1907.09547,  (2019).

\bibitem{davis2018subgradient}
{\sc D.~Davis, D.~Drusvyatskiy, K.~J. MacPhee, and C.~Paquette}, {\em
  Subgradient methods for sharp weakly convex functions}, Journal of
  Optimization Theory and Applications, 179 (2018), pp.~962--982.

\bibitem{davis2019prox}
{\sc D.~Davis and B.~Grimmer}, {\em Proximally guided stochastic subgradient
  method for nonsmooth, nonconvex problems}, SIAM Journal on Optimization, 29
  (2019), pp.~1908--1930.

\bibitem{drusvyatskiy2018efficiency}
{\sc D.~Drusvyatskiy and C.~Paquette}, {\em Efficiency of minimizing
  compositions of convex functions and smooth maps}, Mathematical Programming,
  178 (2019), pp.~503--558.

\bibitem{duchi2017solving}
{\sc J.~C. Duchi and F.~Ruan}, {\em Solving (most) of a set of quadratic
  equalities: Composite optimization for robust phase retrieval}, Information
  and Inference: A Journal of the IMA, 8 (2018), pp.~471--529.

\bibitem{duchi2018stochastic}
{\sc J.~C. Duchi and F.~Ruan}, {\em Stochastic methods for composite and weakly
  convex optimization problems}, SIAM Journal on Optimization, 28 (2018),
  pp.~3229--3259.

\bibitem{goffin1977convergence}
{\sc J.-L. Goffin}, {\em On convergence rates of subgradient optimization
  methods}, Mathematical programming, 13 (1977), pp.~329--347.

\bibitem{grippo2000convergent}
{\sc L.~Grippo}, {\em Convergent on-line algorithms for supervised learning in
  neural networks}, IEEE Transactions on Neural Networks, 11 (2000),
  pp.~1284--1299.

\bibitem{gurbuzbalaban2019random}
{\sc M.~G{\"u}rb{\"u}zbalaban, A.~Ozdaglar, and P.~Parrilo}, {\em Why random
  reshuffling beats stochastic gradient descent}, Mathematical Programming, 186
  (2021), pp.~49--84.

\bibitem{gurbuzbalaban2017convergence}
{\sc M.~G{\"u}rb{\"u}zbalaban, A.~Ozdaglar, and P.~A. Parrilo}, {\em On the
  convergence rate of incremental aggregated gradient algorithms}, SIAM Journal
  on Optimization, 27 (2017), pp.~1035--1048.

\bibitem{gurbuzbalaban2015convergence}
{\sc M.~G{\"u}rb{\"u}zbalaban, A.~Ozdaglar, and P.~A. Parrilo}, {\em
  Convergence rate of incremental gradient and incremental {N}ewton methods},
  SIAM Journal on Optimization, 29 (2019), pp.~2542--2565.

\bibitem{haochen2019random}
{\sc J.~Z. {HaoChen} and S.~Sra}, {\em Random shuffling beats {SGD} after
  finite epochs}, in International Conference on Machine Learning, 2019,
  pp.~2624--2633.

\bibitem{hu2019incremental}
{\sc Y.~Hu, C.~K.~W. Yu, and X.~Yang}, {\em Incremental quasi-subgradient
  methods for minimizing the sum of quasi-convex functions}, Journal of Global
  Optimization, 75 (2019), pp.~1003--1028.

\bibitem{iiduka2014acceleration}
{\sc H.~Iiduka and K.~Hishinuma}, {\em Acceleration method combining broadcast
  and incremental distributed optimization algorithms}, SIAM Journal on
  optimization, 24 (2014), pp.~1840--1863.

\bibitem{joh2010}
{\sc B.~Johansson, M.~Rabi, and M.~Johansson}, {\em A randomized incremental
  subgradient method for distributed optimization in networked systems}, SIAM
  Journal on Optimization, 20 (2010), pp.~1157--1170.

\bibitem{kibardin79}
{\sc V.~Kibardin}, {\em Decomposition into functions in the minimization
  problem}, Avtomatika i Telemekhanika,  (1979), pp.~66--79.

\bibitem{kiwiel2004convergence}
{\sc K.~C. Kiwiel}, {\em Convergence of approximate and incremental subgradient
  methods for convex optimization}, SIAM Journal on Optimization, 14 (2004),
  pp.~807--840.

\bibitem{LSM20}
{\sc J.~Li, A.~M.-C. So, and W.-K. Ma}, {\em Understanding notions of
  stationarity in nonsmooth optimization}, IEEE Signal Processing Magazine, 37
  (2020), pp.~18--31.

\bibitem{li2019incremental}
{\sc X.~Li, Z.~Zhu, A.~M.-C. So, and J.~D. Lee}, {\em Incremental methods for
  weakly convex optimization}, arXiv preprint arXiv:1907.11687,  (2019).

\bibitem{li2018nonconvex}
{\sc X.~Li, Z.~Zhu, A.~M.-C. So, and R.~Vidal}, {\em Nonconvex robust low-rank
  matrix recovery}, SIAM Journal on Optimization, 30 (2020), pp.~660--686.

\bibitem{zhi1994analysis}
{\sc Z.-Q. Luo and P.~Tseng}, {\em Analysis of an approximate gradient
  projection method with applications to the backpropagation algorithm},
  Optimization Methods and Software, 4 (1994), pp.~85--101.

\bibitem{mangasariany1994serial}
{\sc O.~Mangasariany and M.~Solodovy}, {\em Serial and parallel backpropagation
  convergence via nonmonotone perturbed minimization}, Optimization Methods and
  Software,  (1994).

\bibitem{mokhtari2018surpassing}
{\sc A.~Mokhtari, M.~G{\"u}rb{\"u}zbalaban, and A.~Ribeiro}, {\em Surpassing
  gradient descent provably: A cyclic incremental method with linear
  convergence rate}, SIAM Journal on Optimization, 28 (2018), pp.~1420--1447.

\bibitem{nagaraj2019sgd}
{\sc D.~Nagaraj, P.~Jain, and P.~Netrapalli}, {\em {SGD} without replacement:
  Sharper rates for general smooth convex functions}, in International
  Conference on Machine Learning, 2019, pp.~4703--4711.

\bibitem{nedic2001convergence}
{\sc A.~Nedi{\'c} and D.~Bertsekas}, {\em Convergence rate of incremental
  subgradient algorithms}, in Stochastic Optimization: Algorithms and
  Applications, Springer, 2001, pp.~223--264.

\bibitem{nedic2001incremental}
{\sc A.~Nedi{\'c} and D.~P. Bertsekas}, {\em Incremental subgradient methods
  for nondifferentiable optimization}, SIAM Journal on Optimization, 12 (2001),
  pp.~109--138.

\bibitem{nedic2010effect}
{\sc A.~Nedi{\'c} and D.~P. Bertsekas}, {\em The effect of deterministic noise
  in subgradient methods}, Mathematical programming, 125 (2010), pp.~75--99.

\bibitem{nedich2001}
{\sc A.~Nedi{\'c}, D.~P. Bertsekas, and V.~S. Borkar}, {\em Distributed
  asynchronous incremental subgradient methods}, Studies in Computational
  Mathematics, 8 (2001), pp.~381--407.

\bibitem{nedic2009distributed}
{\sc A.~Nedi{\'c} and A.~Ozdaglar}, {\em Distributed subgradient methods for
  multi-agent optimization}, IEEE Transactions on Automatic Control, 54 (2009),
  pp.~48--61.

\bibitem{nemirovski2009robust}
{\sc A.~Nemirovski, A.~Juditsky, G.~Lan, and A.~Shapiro}, {\em Robust
  stochastic approximation approach to stochastic programming}, SIAM Journal on
  optimization, 19 (2009), pp.~1574--1609.

\bibitem{nesterov2013introductory}
{\sc {\relax Yu}.~Nesterov}, {\em Introductory Lectures on Convex Optimization:
  A Basic Course}, vol.~87, Springer Science \& Business Media, 2013.

\bibitem{neto2010incremental}
{\sc E.~S.~H. Neto and {\'A}.~R. De~Pierro}, {\em Incremental subgradients for
  constrained convex optimization: A unified framework and new methods}, SIAM
  Journal on Optimization, 20 (2010), pp.~1547--1572.

\bibitem{rabbat2004distributed}
{\sc M.~Rabbat and R.~Nowak}, {\em Distributed optimization in sensor
  networks}, in Proceedings of the 3rd international symposium on Information
  processing in sensor networks, ACM, 2004, pp.~20--27.

\bibitem{rabbat2005quantized}
{\sc M.~G. Rabbat and R.~D. Nowak}, {\em Quantized incremental algorithms for
  distributed optimization}, IEEE Journal on Selected Areas in Communications,
  23 (2005), pp.~798--808.

\bibitem{ram2009}
{\sc S.~S. Ram, A.~Nedi{\'c}, and V.~V. Veeravalli}, {\em Incremental
  stochastic subgradient algorithms for convex optimization}, SIAM Journal on
  Optimization, 20 (2009), pp.~691--717.

\bibitem{rockafellar2009variational}
{\sc R.~T. Rockafellar and R.~J.-B. Wets}, {\em Variational Analysis},
  vol.~317, Springer Science \& Business Media, 2009.

\bibitem{shamir2016without}
{\sc O.~Shamir}, {\em Without-replacement sampling for stochastic gradient
  methods}, in Advances in Neural Information Processing Systems, 2016,
  pp.~46--54.

\bibitem{Shor:1985:MMN:3585}
{\sc N.~Z. Shor}, {\em Minimization Methods for Non-Differentiable Functions},
  Springer-Verlag, Berlin, Heidelberg, 1985.

\bibitem{solodov1998incremental}
{\sc M.~V. Solodov}, {\em Incremental gradient algorithms with stepsizes
  bounded away from zero}, Computational Optimization and Applications, 11
  (1998), pp.~23--35.

\bibitem{solodov1998error}
{\sc M.~V. Solodov and S.~Zavriev}, {\em Error stability properties of
  generalized gradient-type algorithms}, Journal of Optimization Theory and
  Applications, 98 (1998), pp.~663--680.

\bibitem{tseng1998incremental}
{\sc P.~Tseng}, {\em An incremental gradient(-projection) method with momentum
  term and adaptive stepsize rule}, SIAM Journal on Optimization, 8 (1998),
  pp.~506--531.

\bibitem{V83}
{\sc J.-P. Vial}, {\em Strong and weak convexity of sets and functions},
  Mathematics of Operations Research, 8 (1983), pp.~231--259.

\bibitem{wang2015incremental}
{\sc M.~Wang and D.~P. Bertsekas}, {\em Incremental constraint projection
  methods for variational inequalities}, Mathematical Programming, 150 (2015),
  pp.~321--363.

\bibitem{wang2016st}
{\sc M.~Wang and D.~P. Bertsekas}, {\em Stochastic first-order methods with
  random constraint projection}, SIAM Journal on Optimization, 26 (2016),
  pp.~681--717.

\bibitem{widrow1960adaptive}
{\sc B.~Widrow and M.~E. Hoff}, {\em Adaptive switching circuits}, tech.
  report, Stanford Univ Ca Stanford Electronics Labs, 1960.

\end{thebibliography}
\bibliographystyle{siamplain}

\end{document}